\def\E{\ifmmode{\mathbb E}\else{$\mathbb E$}\fi} %natural numbers
\def\N{\ifmmode{\mathbb N}\else{$\mathbb N$}\fi} %natural numbers%
\def\R{\ifmmode{\mathbb R}\else{$\mathbb R$}\fi} %real numbers
\def\Q{\ifmmode{\mathbb Q}\else{$\mathbb Q$}\fi} %rational numbers
\def\C{\ifmmode{\mathbb C}\else{$\mathbb C$}\fi} %complex numbers
\def\H{\ifmmode{\mathbb H}\else{$\mathbb H$}\fi} %complex numbers
\def\Z{\ifmmode{\mathbb Z}\else{$\mathbb Z$}\fi} %integers
\def\P{\ifmmode{\mathbb P}\else{$\mathbb P$}\fi} %real numbers
\def\T{\ifmmode{\mathbb T}\else{$\mathbb T$}\fi} %real numbers
\def\SS{\ifmmode{\mathbb S}\else{$\mathbb S$}\fi} %real numbers
\def\DD{\ifmmode{\mathbb D}\else{$\mathbb D$}\fi} %real numbers
\newcommand{\del}{\partial}
\newcommand{\ben}{\begin{enumerate}}
\newcommand{\een}{\end{enumerate}}
\newcommand{\be}{\begin{equation}}
\newcommand{\ee}{\end{equation}}
\newcommand{\bea}{\begin{eqnarray}}
\newcommand{\eea}{\end{eqnarray}}
\newcommand{\beastar}{\begin{eqnarray*}}
\newcommand{\eeastar}{\end{eqnarray*}}
\newcommand{\bc}{\begin{center}}
\newcommand{\ec}{\end{center}}
\newtheorem{thm}{Theorem}[section]
\newtheorem{cor}[thm]{Corollary}
\newtheorem{lem}[thm]{Lemma}
\newtheorem{prop}[thm]{Proposition}
\theoremstyle{definition}
\newtheorem{defn}[thm]{Definition}
\newtheorem{rem}[thm]{Remark}
\newtheorem*{thm*}{Theorem}
\numberwithin{equation}{section}
\def\R{{\mathbb R}}
\def\E{{\mathbb E}}
\def\Z{{\mathbb Z}}
\def\C{{\mathbb C}}
\def\R{{\mathbb R}}
\def\P{{\mathbb P}}
\def\N{{\mathbb N}}
\def\11{{\mathbb I}}
\def\H{\mathbb{H}}
\def\delbar{{\overline \partial}}
\def\C{\mathbb{C}}
\def\Z{\mathbb{Z}}
\def\T{\mathbb{T}}
\def\Q{\mathbb{Q}}
\def\E{\ifmmode{\mathbb E}\else{$\mathbb E$}\fi} %natural numbers
\def\N{\ifmmode{\mathbb N}\else{$\mathbb N$}\fi} %natural numbers
\def\R{\ifmmode{\mathbb R}\else{$\mathbb R$}\fi} %real numbers
\def\Q{\ifmmode{\mathbb Q}\else{$\mathbb Q$}\fi} %rational numbers
\def\C{\ifmmode{\mathbb C}\else{$\mathbb C$}\fi} %complex numbers
\def\Z{\ifmmode{\mathbb Z}\else{$\mathbb Z$}\fi} %integers
\def\P{\ifmmode{\mathbb P}\else{$\mathbb P$}\fi} %real numbers
\def\CS{\ifmmode{\mathbb S}\else{$\mathbb S$}\fi} %real numbers
\def\DD{\ifmmode{\mathbb D}\else{$\mathbb D$}\fi} %real numbers
\def\R{{\mathbb R}}
\def\E{{\mathbb E}}
\def\Z{{\mathbb Z}}
\def\C{{\mathbb C}}
\def\R{{\mathbb R}}
\def\N{{\mathbb N}}
\def\MM{{\mathcal M}}
\def\delbar{{\overline \partial}}
\def\CJ{{\mathcal J}}
\def\CM{{\mathcal M}}
\def\CS{{\mathcal S}}
\def\CU{{\mathcal U}}
\def\CV{{\mathcal V}}
\def\darr#1{\raise1.5ex\hbox{$\leftrightarrow$}
\mkern-16.5mu #1}
\def\roughly#1{\raise.3ex\hbox{$#1$\kern-.75em
\lower1ex\hbox{$\sim$}}}
\def\opname#1{\mathop{\kern0pt{\rm #1}}\nolimits}
\def\Im{\opname{Im}}
\def\dim{\opname{dim}}
\def\Area{\operatorname{Area}}
\def\Image{\operatorname{Image}}
\def\Int{\operatorname{Int}}
\begin{document}
\quad
\vskip1.375truein

\title[Displacement energy of Lagrangian submanifold]
{Displacement energy of compact Lagrangian submanifold from open subset}
%\author{Yong-Geun Oh}
\thanks{This work is supported by IBS project \#IBS-R003-D1.}

\author{Yong-Geun Oh}

\address{Center for Geometry and Physics, Institute for Basic Sciences (IBS), Pohang, Korea \&
Department of Mathematics, POSTECH, Pohang, KOREA}
\email{yongoh1@postech.ac.kr}

\begin{abstract} We prove that for any compact Lagrangian submanifold $L$
the Hofer displacement energy for disjointing $L$ from
an open subset $U$ in tame symplectic manifold $(M,\omega)$
is positive, provided $L \cap U  \neq \emptyset$. We also give an explicit
lower bound in terms of an $\epsilon$-regularity type invariant for pseudo-holomorphic curves
relative to $L$ and $U$.
\end{abstract}

\keywords{Lagrangian submanifolds, Hofer displacement energy, relative $\epsilon$-regularity type
invariant, adapted symplectic embedding}

\date{May 10, 2018}

\maketitle

\hskip0.3in MSC2010: 53D05, 53D35, 53D40; 28D10.
\medskip

\tableofcontents

\section{Introduction}

\label{sec:intro}

For a compactly supported Hamiltonian diffeomorphism $\phi$,
Hofer's norm \cite{hofer} is defined to be
\be\label{eq:||phi||}
\|\phi\| = \inf_{H \mapsto \phi} \|H\|
\ee
where $H \mapsto \phi$ means that  $H: M \times [0,1] \to \R$ is a Hamiltonian
such that $\phi = \phi_H^1$, and
\be\label{eq:||H||}
\|H\| = \int^1_0 (\max H_t - \min H_t)\, dt.
\ee
Here $\phi_H^1$ denotes the time-one map of the flow of the Hamilton's equation
$
\dot{z} = X_H(z).
$

\begin{defn}[Displacement energy]\label{defn:disjunct-energy} Let $A, \, B \subset M$ be two
closed subsets. The displacement energy $e(A,B)$ is defined to be
\be\label{eq:eAB}
e(A,B) = \inf_H \{\|H\| \mid A \cap \phi_H^1(B) = \emptyset\}.
\ee
\end{defn}

Clearly if $A \cap B = \emptyset$, then $e(A,B) = 0$. But even when $A \cap B \neq \emptyset$,
$e(A,B) = 0$ may still happen. For example, if $A, \, B$ are submanifolds of $\dim A + \dim B < \dim M$,
then their displacement energy is zero. When $A$ is a compact submanifold of $M$
and $2\dim A= \dim M$, Laudenbach-Sikorav \cite{laud-sikorav} proved that if $A$ is
non-Lagrangian, $e(A,A) = 0$, provided its normal bundle has a section without zeros, i.e.,
as long as there is no topological obstruction to the disjoining.

When both  $A$ and $B$ are Lagrangian submanifolds and a certain form of Floer
homology $HF(A,B)$ is defined and is non-zero, then $e(A,B) =\infty$.
We also refer to the works by Biran \cite{biran,biran2} and Biran-Cornea \cite{biran-cornea}
for a different kind of such an obstruction to displacement of a symplectic ball $B(\lambda)$ in $M$
from a Lagrangian submanifold $L$.

However when there is no such obstruction to the
disjunction for the pair $(A,B)$ and $e(A,B) < \infty$, this question of measuring the
displacement energy, in particular, proving the positivity $e(A,B) > 0$, is a hard problem in general.
This is the question we are pursuing in the present paper:
More specifically, we study such a measurement when $A = \overline U$ with $U$ an open subset,
and $B=L$ is a compact Lagrangian submanifold intersecting $U$, \emph{when $\dim M \geq 4$}.
(The same holds for $\dim M =2$ which is however easy to see unlike the higher dimensional case.)

Incidentally we would like to recall readers that these two cases, those of Lagrangian submanifolds and
of open subsets, are the only general classes of subsets in a symplectic manifold that
are known to admit such lower bounds for displacing the subset from itself.
We refer readers to \cite{hofer,lal-mcduff,oh:dmj,usher} for such a measurement of the open subsets
in terms of various kind of capacities: Hofer-Zehnder capacity \cite{hofer,usher}, Gromov capacity
\cite{lal-mcduff} and spectral capacity \cite{oh:dmj,usher} etc. For the Lagrangian submanifolds,
we refer to \cite{polterov,chekanov,oh:mrl} where such a measurement is made
with respect to the $\epsilon$-regularity type invariant in the spirit of the present paper.

We recall the definition of tame symplectic manifolds:
A symplectic manifold $(M,\omega)$ is called {\it tame}
if it allows an almost complex structure $J_0$ such that the bilinear form
$\omega(\cdot, J_0 \cdot)$ defines a Riemannian metric on $M$ with
bounded curvature and with injectivity radius bounded away from zero.
In this case, we also call {\it tame} the triple $(M,\omega, J_0)$ or
the almost complex structure $J_0$. As usual, when we do our estimates which are implicit mostly
in this paper, we will use various norms always in terms of a fixed such metric.

The following theorem provides an answer to the case of mixture of the two.

\medskip

\noindent \textbf{Main Theorem} {\em Let $(M,\omega)$ be a tame
symplectic manifold and $L \subset M$ be a compact Lagrangian submanifold. Suppose that $U$
is an open subset such that $L \cap U \neq \emptyset$. Then $e(\overline U,L) > 0$.}

\medskip

In fact, we can estimate the displacement energy $e(\overline U,L)$ in terms of an
$\epsilon$-regularity type invariant whose description is briefly in order. We refer
readers to Theorem \ref{thm:enhanced} for the precise statement of the lower bound
for $e(\overline U,L)$.

We start with recalling the (absolute) $\epsilon$-regularity type invariants used
in \cite{oh:imrn}, \cite{chekanov}, \cite{oh:mrl}.
For each tame $J_0$, we define
\beastar
A(J_0;M, \omega) & = &\inf\{ \omega([v])~|~ v: S^2 \to M, \, \text{\rm
non-constant and }\,
\overline{\partial}_{J_0} v = 0 \} \\
A(J_0,L;M,\omega) & = &\inf\{ \omega([w])~|~ w:(D^2,\partial D^2) \to (M,L),\\
&{}& \hskip1in \text{\rm non-constant and }\,
\overline{\partial}_{J_0} w = 0 \}.
\eeastar
It is not difficult to show
$$
A(J_0;M, \omega), \, A(J_0,L;M,\omega) > 0
$$
from the $\epsilon$-regularity theorem and
tameness of $(M,\omega,J_0)$.
(See \cite[Corollary 3.5]{oh:removal} for its proof.)
We then define
\be\label{eq:ALMomega}
A(L;M,\omega) = \sup_{J_0}\min \{A(J_0;M, \omega), A(J_0,L;M,\omega)\}.
\ee
$A(L;M,\omega)$ could be infinity for general compact $L$ and
its finiteness is equivalent to existence of
certain pseudo-holomorphic sphere or a disc attached to $L$. However
it was proved in \cite{chekanov}, \cite{oh:mrl} that $e(L,L) \geq A(L;M,\omega)$.
In particular, if $L$ is displaceable, equivalently, if $e(L,L) < \infty$, then
$A(L;M,\omega) < \infty$ also holds.

For the purpose of the present paper,
we will also need to construct another more refined invariant which is an analog of the
invariant $A(L;M,\omega)$ above by restricting the choice of $J_0$ to those that have
some local reflectional symmetry on a symplectic ball.
Unlike the case of $A(L;M,\omega)$, defining the $\epsilon$-regularity type invariant
relevant to the purpose of the present paper requires some preparation. This kind of invariant in general
was introduced and systematically used by Biran-Cornea in \cite{biran-cornea} in their study of
mixed symplectic packing number. (See \cite[Definition 1.1.1]{biran-cornea} and also
\cite{barraud-cornea}).) We only consider a relative counterpart of the definition of
$A(L;M,\omega)$ for the Lagrangian boundary condition, which
is defined by using the relative version of
isoperimetric inequality for the holomorphic curves with real boundary
condition. We will denote the resulting invariant by $\epsilon(U,L;M,\omega)$.
We postpone the details of its construction till the next section.

The main geometro-analytic framework of our proof of Main Theorem is an adaptation of the one
from \cite{oh:mrl} which gave a simple proof of Chekanov's positivity
theorem \cite{chekanov} of the displacement energy of general
compact Lagrangian submanifold in tame symplectic manifold.
In this article, we use the same cut-off version
of this Floer's perturbed Cauchy-Riemann equation as that of \cite{oh:mrl} and
adapt the scheme used therein
to the current context of our interest in the following way:
\begin{enumerate}
\item We identify non-emptiness of $U \cap L$ and emptiness of intersections $U \cap \phi_H^1(L)$
as an existence criterion of certain solution of Hamiltonian perturbed
Cauchy-Riemann equations.
\item
We then combine some basic energy estimate from \cite{oh:mrl}
with the ideas from
\cite{oh:dmj}, \cite{biran-cornea}
to relate the displacement energy to the \emph{$\epsilon$-regularity-type}
invariant relative to $L$.
\end{enumerate}
In addition to this, the scheme of relating the displacement energy with
the $\epsilon$-regularity type invariants resembles that of the proof of
nondegeneracy of spectral norm of $Ham(M,\omega)$ given in \cite{oh:dmj}.

We would like to thank M. Kawasaki for informing us of some positivity result
he obtained via a study of Lagrangian spectral invariants
for monotone Lagrangian submanifolds whose Floer cohomology is non-trivial.
This was the starting point of our investigation of the question of
displacing general Lagrangian submanifold from an open subset.
We also thank him for interesting discussions on the problem
in the early stage of current research while he was a member of IBS-CGP. Discussion with him
much helped the author crystalizing the main scheme of the proof.

After the paper was posted in arXiv, Jun Zhang attracted our attention to
Usher's previous work \cite[Corollary 4.10]{usher:submanifolds} in which the same positivity
statement is proved with a slightly different kind of lower bound again
by using the framework of \cite{oh:mrl} as in the present paper. We thank
Zhang for alerting us for \cite{usher:submanifolds} and are sorry for
our omission of that article from our attention.

\section{Relative $\epsilon$-regularity type invariants}

\label{sec:e-regularity}

In this section, we explain a direct analog of the
invariant $A(L;M,\omega)$ mentioned in the introduction.
Various kinds of $\epsilon$-regularity type invariants were
introduced in \cite[Section 4]{oh:dmj} and related to the displacement of
symplectic balls. We closely follow
similar scheme therefrom which we adapt to the present relative
context of the pair $(U,L)$.

We start with some general discussion on Darboux-Weinstein chart.
Let $L \subset M$ be a compact
Lagrangian submanifold. Consider the Darboux-Weinstein chart
$
\Phi: \CU \to \CV
$
where $\CU$ is a neighborhood of $L$ in $M$ and $\CV$ is a
neighborhood of the zero section $o_L \subset T^*L$. Then by definition,
we have
$$
\omega = \Phi^*\omega_0, \quad \omega_0 = - d\theta
$$
for the Liouville one-form $\theta$ on $T^*L$ and $\Phi|_L = id_L$
under the identification of $L$ with $o_L$.

Fix any Riemannian metric $g$ on $L$. For $x \in \CU$, we define
$$
\|x\|_{g,\Phi} = \|\Phi(x)\|_{g(\pi(\Phi(x))}
$$
where $\Phi(x) \in T_{\pi(\Phi(x))}^* L$ and $\pi: T^*L \to L$
is the canonical projection, and $\|\cdot \|_{g(q)}$ is the
norm on $T_q^*L$ induced by the inner product $g(q)$.

\begin{defn}\label{defn:width} Let $L \subset M$ be a compact
Lagrangian submanifold equipped with a metric $g$. Consider the Darboux-Weinstein chart
$
\Phi: \CU \to \CV.
$
Define
$$
{\frak w}_{\text{DW}}(\Phi;g): = \inf_{q \in L} \left(\sup_{x \in \pi^{-1}(q) \cap \CU} \|x\|_{g,\Phi}\right)
$$
and
\be\label{eq:DW-width}
{\frak w}_{\text{DW}}(L;M) = \sup_{\Phi} {\frak w}_{\text{DW}}(\Phi;g)
\ee
over all Darboux-Weinstein chart of $L$.
We call ${\frak w}_{\text{DW}}(L;M)$ the \emph{Weinstein width} of $L$
(relative to the metric $g$). We will fix this metric $g$ on $L$ throughout the paper.
\end{defn}
Obviously ${\frak w}_{\text{DW}}(L;M) > 0$
since ${\frak w}_{\text{DW}}(\Phi;g) > 0$ for any Darboux-Weinstein chart $\Phi$
 for compact Lagrangian submanifold $L$.

Next following Biran-Cornea \cite{biran-cornea}, we introduce

\begin{defn}\label{defn:relativetoL} Let $e:(B^{2n}(r),\omega_0) \to (M,\omega)$
be a symplectic embedding of the closed
standard ball of $B^{2n}(r) \subset \C^n$ of radius $r$. We say $e$ \emph{is adapted to $L$}
or simply \emph{$L$-adapted} if
\be\label{eq:e-1L}
e^{-1}(L) = B^{2n}(r) \cap \R^n.
\ee
\end{defn}

We prove the following existence result on such an $L$-adapted embedding.

\begin{prop}\label{prop:adaptedB>0} Let $L$ be a compact Lagrangian submanifold of
$(M,\omega)$ and let $p \in L$. Then there exists an $L$-adapted embedding $e: B^{2n}(r) \to M$
centered at $p$ for some $r > 0$ whose size depends only on the pair $(M,L)$.
\end{prop}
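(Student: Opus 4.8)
The plan is to construct the $L$-adapted embedding in two stages, first producing a linear model near $p$ and then correcting it to a genuine symplectic embedding using the Darboux--Weinstein chart. I would begin by working inside the Darboux--Weinstein chart $\Phi: \CU \to \CV \subset T^*L$ guaranteed by the Weinstein neighborhood theorem, so that $\omega = \Phi^*\omega_0$ with $\omega_0 = -d\theta$ the canonical form and $\Phi|_L = \mathrm{id}_L$. Thus it suffices to build, inside $T^*L$ near the point $p \in o_L$, a symplectic embedding $e_0: B^{2n}(r) \to T^*L$ with $e_0^{-1}(o_L) = B^{2n}(r) \cap \R^n$; composing with $\Phi^{-1}$ then gives the desired $e$. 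Choosing normal coordinates $(q^1,\dots,q^n)$ for the fixed metric $g$ on $L$ centered at $p$, with dual fiber coordinates $(p_1,\dots,p_n)$, the form $\omega_0 = \sum dq^i \wedge dp_i$ is already in Darboux form and $o_L = \{p_i = 0\}$; so the coordinate map itself is a symplectic embedding sending a small Euclidean ball to a neighborhood of $p$ and sending the real slice exactly to $o_L$. The only issue is that the domain where these normal coordinates are defined (and the image lies in $\CV$) is an open set whose shape depends on $g$ and $\Phi$, not literally a round ball; so I would take $r$ small enough that $B^{2n}(r)$ lies in this domain, and rescale if necessary.

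Next I would address uniformity, i.e.\ that $r$ depends only on $(M,L)$ and not on $p$. Here I use compactness of $L$: the injectivity radius of $(L,g)$ is bounded below by some $\rho_0 > 0$, and by a standard covering/continuity argument the Darboux--Weinstein chart $\Phi$ can be chosen so that its image $\CV$ contains a uniform codisc bundle $\{\|\xi\|_{g} < \delta_0\}$ over all of $L$ for some $\delta_0 > 0$ — this is exactly the positivity of the Weinstein width $\mathfrak{w}_{\mathrm{DW}}(L;M) > 0$ recorded after Definition~\ref{defn:width}. Given such a uniform $\Phi$, the normal-coordinate construction above works with $r = r(\rho_0,\delta_0, \|\mathrm{Rm}_g\|)$, a quantity depending only on $g$ (hence on the fixed metric, which is part of the data of $(M,L)$) and on $\Phi$. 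Since the metric $g$ is fixed throughout the paper and $\Phi$ is chosen once and for all, $r$ is determined by $(M,L)$ alone, as claimed.

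The main obstacle I expect is the precise matching condition $e^{-1}(L) = B^{2n}(r) \cap \R^n$, which is a statement about the \emph{image} of the embedding, not just about the symplectic form: I must ensure that the embedding hits $L$ in exactly the real slice, with no extra intersections coming from the global geometry of $L$ inside $M$. Working entirely inside the Darboux--Weinstein chart neutralizes this, because there $L$ literally equals the zero section $o_L$, and the normal coordinates were arranged so that $\{p_i = 0\}$ maps onto $o_L$; the only subtlety is checking that the chosen ball $B^{2n}(r)$ is small enough that its image under the coordinate map stays within the chart $\CU$, so that no point of $L \setminus \Phi(\CU)$ can be in the image. A second, more technical point is that normal coordinates give a symplectic form that agrees with $\sum dq^i \wedge dp_i$ only along the zero section unless one is careful; but in fact for the cotangent bundle the canonical form in \emph{any} fiber-linear coordinates induced by a local frame on $L$ is exactly $\sum dq^i \wedge dp_i$ (this is the coordinate expression of $-d\theta$), so no Moser-type correction is needed — the embedding is symplectic on the nose. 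Hence the proof reduces to a careful but routine choice of radius, with the compactness of $L$ supplying uniformity.
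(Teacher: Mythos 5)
Your proof is correct and follows essentially the same route as the paper: work inside a Darboux--Weinstein chart so that $L$ is identified with the zero section, take Darboux coordinates $(q,p)$ near $p$ coming from a chart on $L$ and the induced fiber coordinates, observe that the real slice maps exactly onto $o_L$, and appeal to positivity of the Weinstein width plus compactness of $L$ for uniformity of $r$. Your additional remarks (using normal coordinates for $g$, the injectivity-radius bound, and the observation that the canonical form is already $\sum dq^i \wedge dp_i$ in any coordinate-induced fiber coordinates so no Moser correction is needed) are all correct elaborations of points the paper leaves implicit.
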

\begin{proof} We first choose a Darboux-Weinstein neighborhood
$\CU$ of $L$ in $M$. Then we take a canonical coordinate
$(q_1, \ldots, q_n,p_1, \ldots, p_n)$ in a neighborhood $V$ of $p$ in $\CU$.
Using this coordinates, we identify $V$ as an open subset of $\C^n$ by
identifying the standard complex coordinates $(z_1,\ldots, z_n)$ to be $z_j = q_j + \sqrt{-1} p_j$.
Then we can obviously find a symplectic embedding $e: B^{2n}(r) \to V$ centered at
$p$ that also satisfies
$$
e(B^{2n}(r)) \cap L = e(B^{2n}(r) \cap \R^n), \quad \R^n \subset \C^n
$$
if we choose a sufficiently small $r > 0$. In other words, the resulting $e$ is $(U,L)$-adapted.

We remark that the choice of such a radius $r > 0$
depends only on the width ${\frak w}_{\text{DW}}(L;M)$ which in turn
depends only on the pair $(M,L)$. This finishes the proof.
\end{proof}

With this definition, we recall the following Biran-Cornea's relative version of Gromov area of
the pair $(M,L)$ from \cite{biran-cornea}.

We denote by $j$ the standard complex structure of $\C^n$
(or on any 2-dimensional Riemann surface in general).

\begin{defn}\label{defn:J-rele} Let $L \subset (M,\omega)$ be a Lagrangian
submanifold and a symplectic embedding $e: B^{2n}(r) \to M$ relative to $L$
be given. We say a compatible almost complex structure $J_0$ adapted to
$e$ if $J_0 = e_*j$ on $e(B^{2n}(r)) \subset M$. We denote by $\CJ_{\omega;e}$
the set of $J_0$ adapted to $e$.
\end{defn}

\begin{defn} Let $L \subset M$ be given. Consider pairs $(e,J_0)$ with $J_0$ adapted to
a symplectic embedding $e$ adapted to $L$. Call it an \emph{adapted pair} of $L$
or simply an $L$-adapted pair.
\end{defn}

Consider a compact surface $\Sigma$ with the decomposition
$$
\del \Sigma = \del_-\Sigma \cup \del_+ \Sigma
$$
so that $\del_- \Sigma \cap \del_+ \Sigma$ consists of a finite number of points.
Call a subset $C \subset M$ a $J_0$-holomorphic curve if we can represent $C$
as the image of a somewhere injective $J_0$-holomorphic map $w: \Sigma \to M$.
We denote $\del C = w(\del \Sigma)$ and $\del_\pm C = w(\del \Sigma_\pm)$.
When we are given an open subset $U$ intersecting $L$, we
consider $e$ that also satisfies $e(B^{2n}(r)) \subset U$.

\begin{defn}\label{defn:L-proper} Let $(e,J_0)$ be an $L$-adapted pair.
\begin{enumerate}
\item We say a $J_0$-holomorphic curve $C \subset M$ is
\emph{properly $(L,e)$-adapted } if $\del_-C \subset L$ and
$\del_+ C \cap e(\del B^{2n}(r)) = \emptyset$.
\item Let $U$ be a given open subset and let
symplectic embedding $e$ satisfy $e(B^{2n}(r))\subset U$. In this case we say
$(e,J_0)$ is \emph{$(U,L)$-adapted}, and the curve $C$ given as above is
$(U,L;e,J_0)$-adapted.
\end{enumerate}
\end{defn}

The following is the relative analog to the monotonicity formula,
which is a consequence of the usual monotonicity formula combined with
a doubling argument via the reflection principle for
$J_0$-holomorphic discs but this time considering only those $J_0$
coming from $\CJ_{\omega;e}$.

\begin{lem}\label{lem:1/2monotonicity} Let $(e,J_0)$ be an $(U,L)$-adapted pair. Then for any properly
$(U,L;e,J_0)$-adapted curve $C$, there exists $r_0> 0$ depending only on $(U,L)$ such that
\be\label{eq:AeJ}
\int_{C\cap e(B^{2n}(r))}  \omega \geq \frac{\pi r^2}{2}
\ee
for all $0 < r \leq r_0$.
\end{lem}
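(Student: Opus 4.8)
The plan is to reduce the claimed half-monotonicity estimate to the standard monotonicity inequality for (relatively) minimal surfaces via a doubling construction, using crucially that $J_0 = e_*j$ on the whole ball $e(B^{2n}(r))$. First I would work entirely inside the ball: pull everything back by $e^{-1}$ so that we are looking at the $j$-holomorphic curve $C' = e^{-1}(C) \cap B^{2n}(r) \subset \C^n$, with $\del_- C'$ contained in the real locus $B^{2n}(r) \cap \R^n$ and $\del_+ C'$ disjoint from $\del B^{2n}(r)$. Here $j$ is the standard complex structure, so $C'$ is a genuine holomorphic (hence minimal) surface with boundary on $\R^n$, and the symplectic area $\int_{C \cap e(B^{2n}(r))} \omega$ equals the Riemannian area of $C'$ with respect to the flat metric on $B^{2n}(r)$ (since $e$ is symplectic and $J_0$-calibrated there). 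The hypothesis that $C$ is properly $(U,L;e,J_0)$-adapted guarantees that $C'$ is a proper minimal surface in the ball whose only boundary \emph{inside} the open ball lies on $\R^n$: the $\del_+$ part exits through $\del B^{2n}(r)$, and the $\del_-$ part sits on the real subspace.

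Next I would invoke the reflection principle: since $\R^n \subset \C^n$ is a totally real (in fact Lagrangian) linear subspace fixed pointwise by the standard complex conjugation $c(z) = \bar z$, which is an anti-holomorphic isometry of $(B^{2n}(r), j)$, the curve $\widetilde C = C' \cup c(C')$ is again a $j$-holomorphic — hence minimal — surface in $B^{2n}(r)$, now with no boundary in the interior of the ball except possibly the doubled $\del_+$ portion, which still lies outside any sub-ball $B^{2n}(\rho)$ for $\rho$ small. Crucially $\widetilde C$ passes through the center $e^{-1}(\text{basepoint}) = 0 \in \R^n$ (or, if $C$ merely meets the ball, through some point whose distance to $\del B^{2n}(r)$ is controlled); let me set up the basepoint so that $\widetilde C$ is a minimal surface through $0$ that is proper in $B^{2n}(r)$. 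Then the classical monotonicity formula for minimal surfaces in $\C^n = \R^{2n}$ with its flat metric gives $\area(\widetilde C \cap B^{2n}(\rho)) \geq \pi \rho^2$ for all $\rho \leq r_0$, where $r_0$ depends only on the geometry near $L$ — concretely on the Weinstein width ${\frak w}_{\text{DW}}(L;M)$ controlling the radius produced by Proposition \ref{prop:adaptedB>0}, which in turn depends only on $(U,L)$. Since $\widetilde C = C' \cup c(C')$ and $c$ is an isometry, $\area(\widetilde C \cap B^{2n}(\rho)) = 2\,\area(C' \cap B^{2n}(\rho))$, so $\area(C' \cap B^{2n}(\rho)) \geq \frac{\pi \rho^2}{2}$; taking $\rho = r$ and pushing back forward by $e$ yields \eqref{eq:AeJ}.

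The main obstacle is making the doubling step rigorous at the level of surfaces rather than parametrized maps: the reflection principle for holomorphic discs with totally real boundary condition is classical, but one must check that gluing $C'$ to its conjugate along the real boundary arc produces a set that is again the image of a $j$-holomorphic map (smooth across $\R^n$), and that the $\del_+$ boundary components do not interfere — this is where the condition $\del_+ C \cap e(\del B^{2n}(r)) = \emptyset$ is used, ensuring $\del_+ C'$ stays at positive distance from $\del B^{2n}(r)$ so that the monotonicity ball $B^{2n}(r_0)$ can be chosen to avoid it. A secondary point is the restriction on $J_0$: the whole argument only works because $J_0$ is \emph{exactly} $e_*j$ on the ball, so there is no curvature or almost-complex correction term and the flat monotonicity constant $\pi$ is exact; this is precisely why one passes to the refined invariant using only $J_0 \in \CJ_{\omega;e}$ rather than arbitrary tame structures. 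I would close by remarking that somewhere-injectivity of $w$ is only needed to talk about $C$ as an honest subset and plays no role in the estimate itself.
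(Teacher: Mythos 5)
Your proposal is essentially the paper's own proof: pull the curve back by $e^{-1}$ into $B^{2n}(r)\subset\C^n$ using $J_0=e_*j$, double across the real locus $\R^n$ by Schwarz reflection, apply the classical area lower bound $\geq \pi r^2$ for a proper holomorphic (minimal) curve through the center of the flat ball, and divide by two using that conjugation is an isometry. The paper labels the key inequality ``isoperimetric'' rather than ``monotonicity,'' but the doubling-and-area-bound structure is identical, so there is nothing substantive to distinguish the two arguments.
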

\begin{proof} Represent $C$ as the image of $J_0$-holomorphic map $w:\Sigma \to M$.
Since $(e,J_0)$ is $(U,L)$-adapted, $e^{-1}\circ w$ which is a holomorphic map
into $\C^n$ with respect to standard complex structure $j$ with real boundary condition.

Therefore we can apply the standard reflection principle in complex
one-variable theory to $e^{-1}\circ w$, and double it to a
surface that is reflection-symmetric.
Applying $e$ back to it, we double $C$
to a proper $j$-holomorphic curve $S = \overline C \# C$.
Then $e^{-1}(S)$ defines a proper holomorphic curve in $B^{2n}(r)$ containing $0 \in B^{2n}(r) \cap \R^n$.
Applying the isoperimetric inequality for holomorphic curves $e^{-1}(S)$ in $B^{2n}(r) \subset \C^n$
and the symplectic property of the embedding $e$, we have
$$
\Area(S) \geq \pi r^2.
$$
Since $S = \overline C \# C$ and $\Area(C) = \Area(\overline C)$,
$$
\Area(S) = 2 \Area(C) \leq 2 \int w^*\omega
$$
Combining these two inequalities, we have finished the proof of \eqref{eq:AeJ}.
\end{proof}

Based on this relative monotonicity formula, we proceed the process of defining the
analog to the invariant $A(L;M,\omega)$ relative to an open subset $U$.

For each given properly $(U,L)$-adapted pair $(e,J_0)$, we define
\bea\label{eq:ALeJ0}
A(U,L;e,J_0)
& = & \inf_{C} \left\{\int_{C\cap e(B^{2n}(r))} \omega \, \Big \vert\,
C \text{ is $(U,L;e,J_0)$-adapted}, e(0) \in C \, \right\}. \nonumber\\
&{}&
\eea
We put $A(U,L;e,J_0)  = \infty$  as usual if there exists no $L$-adapted pair $(e,J_0)$ that admits
an $(U,L;e,J_0)$-adapted curve satisfying \eqref{eq:ALeJ0}. Next we define
\be\label{eq:AM}
A(U,L;M,\omega) = \sup_{(e,J_0)} \{A(U,L;e,J_0) \mid \text{$(e,J_0)$ is properly $(U,L)$-adapted}\}.
\ee

Next we introduce a restricted version of $A(J_0;M,\omega)$ and $A(J_0,L;M,\omega)$ given in \eqref{eq:ALMomega}.
 We define $A^U(J_0,L;M,\omega)$ (resp. $A^U(J_0;M,\omega)$) in the same way
as that of $A(J_0,L;M,\omega)$ (resp. of $A(J_0;M,\omega)$) given in the introduction, but
restricting $J_0$ to those contained in $\CJ_{e,\omega}$ for some $(U,L)$-adapted embedding
$e$. Then define
\be\label{eq:AULMomega}
A^U(L;M,\omega) = \sup_{(e,J_0)} \min\{A^U(J_0;M,\omega), A^U(J_0,L;M,\omega)\}
\ee
where we take the supremum over all $(U,L)$-adapted pair $(e,J_0)$.

Finally we are arrived at the definition of the invariant we have been seeking for.
\begin{defn}\label{defn:epsilonLM}

We denote
$$
\epsilon(U,L;M,\omega) = \min \{A^U(L;M,\omega), A(U,L;M,\omega)\}.
$$
\end{defn}
A priori the possibility of $\epsilon(U,L;M,\omega) = \infty$ is not ruled out.
The following theorem will guarantee that this will not happen under
the circumstance of Main Theorem.

\begin{thm}\label{thm:exist} Let $(M,\omega)$ is a tame symplectic manifold
and $L \subset M$ be a compact Lagrangian submanifold. Let $U$ be an open subset
such that $L \cap U \neq \emptyset$ and $\overline U \cap \phi(L) = \emptyset$
for some Hamiltonian diffeomorphism $\phi$. Then
$0 < \epsilon(U,L;M,\omega) < \infty$.
\end{thm}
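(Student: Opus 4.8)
The plan is to prove the two inequalities $0 < \epsilon(U,L;M,\omega)$ and $\epsilon(U,L;M,\omega) < \infty$ separately, since they are of completely different natures: the lower bound is an $\epsilon$-regularity/compactness argument, while the finiteness is an existence argument that actually produces a holomorphic curve meeting the constraints, and it is here that the displacing hypothesis $\overline U \cap \phi(L) = \emptyset$ enters.

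For the \emph{positivity}, recall $\epsilon(U,L;M,\omega) = \min\{A^U(L;M,\omega),\, A(U,L;M,\omega)\}$. The term $A^U(L;M,\omega)$ is a restricted version of $A(L;M,\omega)$ over the smaller class $\CJ_{\omega;e}$, and for each fixed $(U,L)$-adapted $J_0$ the two constants $A^U(J_0;M,\omega)$ and $A^U(J_0,L;M,\omega)$ are bounded below by the usual $\epsilon$-regularity theorem together with tameness of $(M,\omega,J_0)$ --- exactly as quoted from \cite[Corollary 3.5]{oh:removal}; since one $(U,L)$-adapted pair suffices to take the supremum in \eqref{eq:AULMomega}, we get $A^U(L;M,\omega) > 0$. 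For the term $A(U,L;M,\omega)$, I would invoke Lemma \ref{lem:1/2monotonicity}: fixing any one properly $(U,L)$-adapted pair $(e,J_0)$ (which exists by Proposition \ref{prop:adaptedB>0}, shrinking the ball so that $e(B^{2n}(r)) \subset U$) and any $0 < r \le r_0$, every $(U,L;e,J_0)$-adapted curve $C$ with $e(0)\in C$ satisfies $\int_{C\cap e(B^{2n}(r))}\omega \ge \pi r^2/2$, hence $A(U,L;e,J_0) \ge \pi r^2 / 2 > 0$, and the supremum in \eqref{eq:AM} is at least this. Therefore $\epsilon(U,L;M,\omega) \ge \min\{A^U(L;M,\omega),\pi r_0^2/2\} > 0$.

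For the \emph{finiteness}, I would argue that the displacing assumption forces the existence of an $(U,L;e,J_0)$-adapted curve through $e(0)$ of controlled area, or else a $J_0$-holomorphic sphere or disc of controlled area, which bounds $\epsilon(U,L;M,\omega)$ above. Concretely: pick $p \in L \cap U$, an $(U,L)$-adapted pair $(e,J_0)$ centered at $p$ with $e(B^{2n}(r)) \subset U$ by Proposition \ref{prop:adaptedB>0}, and a Hamiltonian $H$ with $\overline U \cap \phi_H^1(L) = \emptyset$. One then sets up the cut-off Floer perturbed Cauchy--Riemann equation of \cite{oh:mrl} with one boundary on $L$ and, on the relevant portion, $J \equiv J_0$; the point $e(0) \in U \cap L$ being nonempty while $U \cap \phi_H^1(L) = \emptyset$ is used (as in item (1) of the introduction's scheme) to produce a nonconstant solution $u$. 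By Gromov compactness and the energy estimate from \cite{oh:mrl}, either $u$ itself, or a bubble split off from it, gives a $J_0$-holomorphic object --- a sphere, a disc with boundary on $L$, or a properly $(U,L;e,J_0)$-adapted curve through $e(0)$ --- whose symplectic area is bounded by a constant $C(M,L,U)$ independent of further choices. In every case this yields a finite upper bound for one of $A^U(J_0;M,\omega)$, $A^U(J_0,L;M,\omega)$, $A(U,L;e,J_0)$, hence for $\epsilon(U,L;M,\omega)$.

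The main obstacle is the finiteness half, and within it, ensuring that the curve extracted from the Floer-theoretic compactness argument genuinely satisfies the rather rigid combinatorial conditions in Definition \ref{defn:L-proper} --- in particular that it can be arranged to pass through $e(0)$ and that its ``$\del_+$'' part stays off $e(\del B^{2n}(r))$ --- rather than merely being \emph{some} holomorphic curve near $L$. This requires the careful bookkeeping of boundary components and the placement of the cut-off function that is presumably the technical heart of the construction deferred to the next section; I expect it to mirror, with the relative modifications, both the spectral-norm nondegeneracy argument of \cite{oh:dmj} and the mixed-packing setup of \cite{biran-cornea}. The energy estimate bounding the area by a quantity depending only on $(M,L,U)$ is routine given \cite{oh:mrl}, and the positivity half is essentially immediate from the lemmas already proved.
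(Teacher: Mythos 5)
Your proposal is correct and tracks the paper's argument closely, but it is worth recording that your positivity half is actually \emph{cleaner} than the paper's own route. The paper derives $\epsilon(U,L;M,\omega) > 0$ from Corollary \ref{cor:epsilon>0}, which carries an extraneous hypothesis (``Suppose there exists a solution $u$ of \eqref{eq:CRJt}\dots'') that is never discharged in the proof of Theorem~\ref{thm:exist}; in fact that existence hypothesis is irrelevant for positivity, since an empty infimum in \eqref{eq:ALeJ0} is $+\infty$. You correctly separate the two halves by nature: positivity is a pure $\epsilon$-regularity/monotonicity statement --- $A^U(J_0;M,\omega)$ and $A^U(J_0,L;M,\omega)$ are positive by the usual $\epsilon$-regularity for tame $J_0$, and Lemma~\ref{lem:1/2monotonicity} gives the uniform lower bound $\pi r^2/2$ on \emph{every} term in the infimum defining $A(U,L;e,J_0)$, so all three quantities are bounded away from zero with no existence input. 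The paper conflates this slightly. Your finiteness half faithfully reproduces the paper's scheme (Proposition~\ref{prop:alternative} and the Section~\ref{sec:proof} wrap-up): the displacing Hamiltonian plus the cut-off Floer equation of Section~\ref{sec:cut-off} produces, via Gromov--Floer compactness and the energy bound \eqref{eq:EJK-0}, either a properly $(U,L;e,J_0)$-adapted curve through $e(0)$ (Case 1) or a disc/sphere bubble on $L$ (Case 2), each with area $\le \|H\|$, which caps one of the two invariants in Definition~\ref{defn:epsilonLM}. You also correctly flag that the delicate part is the bookkeeping ensuring the extracted curve satisfies the conditions of Definition~\ref{defn:L-proper}, which the paper handles via Lemma~\ref{lem:v-nonconstant} and Proposition~\ref{prop:case-exist}.
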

We will give its proof in the course of proving Main Theorem. The main task is
to establish an \emph{existence result} of a $(U,L)$-adapted pair $(e,J_0)$ that
admits a properly $(L,e)$-adapted $J_0$-holomorphic curve $C$.

One way of producing such a curve $C$ appearing above is as follows.
Consider a map $v: \R \times [0,1] \to M$ that satisfies
$$
v(0,0) \in U\cap L
$$
and the genuine Cauchy-Riemann equation
\be\label{eq:v-CR}
\begin{cases}
\frac{\partial v}{\partial \tau} + J_0\frac{\partial v}{\partial t} =0\\
v(\tau,0) \in L, \, v(\tau,1) \in \phi_H^1(L).
\end{cases}
\ee
We recall that any stationary i.e., any $\tau$-independent finite energy solution of \eqref{eq:v-CR}
is a constant solution valued in $L \cap \phi_H^1(L)$. The following lemma is a key
lemma that enters in our construction of an $(U,L;e,J_0)$-adapted curve which plays an
important role in the proof of Main Theorem.

\begin{lem}\label{lem:v-nonconstant}
Suppose $L \cap U \neq \emptyset$ and $U \cap \phi_H^1(L) = \emptyset$.
Let $(e,J_0)$ be a $(U,L)$-adapted pair.
Then for any finite energy solution $v$ of \eqref{eq:v-CR},
there exists some $R_0 > 0$ such that
\be\label{eq:vatinfty}
\Image v|_{(\R \setminus [-R_0,R_0]) \times [0,1]}\subset M \setminus U.
\ee
\end{lem}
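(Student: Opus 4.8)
The plan is to argue by contradiction using the standard bubbling/compactness dichotomy for finite energy solutions of the unperturbed Cauchy-Riemann equation with moving Lagrangian boundary, together with the fact that $v$ has finite energy. Suppose no such $R_0$ exists. Then there is a sequence $(\tau_k, t_k) \in \R \times [0,1]$ with $|\tau_k| \to \infty$ and $v(\tau_k, t_k) \in \overline U$ (after passing to the closure, which we may do since $\overline U$ is closed); by compactness of $[0,1]$ we may assume $t_k \to t_\infty \in [0,1]$. Because $v$ has finite energy, the standard analysis of strip-like ends (no energy concentration at infinity, so no bubbling there) forces the translated maps $v_k(\tau,t) := v(\tau + \tau_k, t)$ to converge, in $C^\infty_{loc}$ after possibly passing to a subsequence, to a \emph{holomorphic strip} $v_\infty: \R \times [0,1] \to M$ with boundary conditions $v_\infty(\tau,0) \in L$, $v_\infty(\tau,1) \in \phi_H^1(L)$, and with finite energy (indeed with energy that is a limit of the vanishing tail energies of $v$, hence zero). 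A finite energy, in fact zero energy, holomorphic strip is constant, valued in $L \cap \phi_H^1(L)$. But $v_\infty(0, t_\infty) = \lim_k v(\tau_k, t_k)$ lies in $\overline U$, so this constant lies in $\overline U \cap \phi_H^1(L) \subseteq \overline{U \cap \phi_H^1(L)}$; since $U \cap \phi_H^1(L) = \emptyset$ this set is empty unless the constant is a limit point, and a cleaner way to close this: the constant value $p$ satisfies $p \in L$ and $p \in \phi_H^1(L)$, and $p \in \overline U$. One must then rule out $p \in \overline U \setminus U$ as well, which is handled by the hypothesis $U \cap \phi_H^1(L) = \emptyset$ only if we strengthen slightly — see below.

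More carefully, I would set up the convergence via the removable-singularity-free version of Gromov compactness on the half-infinite cylinder: split $\R = (-\infty, -T] \cup [-T,T] \cup [T,\infty)$ and use that $\int_{(\R \setminus [-T,T]) \times [0,1]} |dv|^2 \to 0$ as $T \to \infty$ (finite energy); the $\epsilon$-regularity theorem for $J_0$-holomorphic strips with Lagrangian boundary then gives a uniform $C^1$, hence $C^\infty$, bound on $v$ near $\pm\infty$, so no bubbles form and the limit $v_\infty$ is genuinely a smooth strip with total energy $0$. Zero energy plus the equation forces $dv_\infty \equiv 0$. The image point $p := v_\infty \equiv \mathrm{const}$ then lies in $L \cap \phi_H^1(L) \cap \overline U$.

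The main obstacle — and the point where one must be slightly careful — is precisely that the hypothesis gives $U \cap \phi_H^1(L) = \emptyset$, whereas the limit point $p$ only lands in the \emph{closure} $\overline U$. There are two ways around this, and I would take the first: one shrinks $U$ infinitesimally at the start, noting that it suffices to prove the lemma for a slightly larger open set, or equivalently one observes that in the intended application (Theorem \ref{thm:exist} and Main Theorem) the displacing diffeomorphism $\phi = \phi_H^1$ disjoints $\overline U$, i.e. we actually have $\overline U \cap \phi_H^1(L) = \emptyset$, so $p \in L \cap \phi_H^1(L) \cap \overline U = \emptyset$, a contradiction. (Indeed, compare the hypothesis of Theorem \ref{thm:exist}, which is stated with $\overline U$.) Alternatively, one keeps $U$ open but notes that for $|\tau|$ large $v(\tau, t)$ must actually enter $U$ itself, not merely $\overline U$, to produce a violation of \eqref{eq:vatinfty} as stated, and then $v(\tau_k,t_k) \in U$ together with $v_\infty$ constant shows the constant is in $\overline{U}$; combined with $v_\infty(0,0) \in \phi_H^1(L)$ one still needs $\overline U \cap \phi_H^1(L)= \emptyset$, so the first route is cleaner. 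Either way, once the contradiction is in hand, negating the assumption yields the desired $R_0 > 0$ with $\Image v|_{(\R \setminus [-R_0,R_0]) \times [0,1]} \subset M \setminus U$. The remaining steps — the $\epsilon$-regularity estimate and the Gromov-type convergence on strip-like ends — are entirely standard for the unperturbed equation \eqref{eq:v-CR} and I would quote them from the literature on Floer theory for Lagrangian intersections rather than reprove them.
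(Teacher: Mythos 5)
Your argument is correct and is essentially the paper's own: you negate the conclusion, pick a sequence of points in the far strip whose images land in $\overline U$, use finite energy plus $\epsilon$-regularity to show that the restrictions of $v$ to the slices $\{\tau = \tau_k\}$ (equivalently the translated maps $v_k$) converge to a constant, and observe that the constant lies in $L \cap \phi_H^1(L) \cap \overline U$, which is supposed to be empty. The paper phrases this via the paths $z_j(t) = v(R_j', t)$ rather than the translated strips, but that is the same compactness argument.

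The $U$ versus $\overline U$ point you raise is a genuine wrinkle, and it is worth noting that the paper's own proof quietly resolves it the same way you do: although the lemma's hypothesis is stated with ``$U \cap \phi_H^1(L) = \emptyset$'', the final line of the paper's proof derives the contradiction from ``the hypothesis $\overline U \cap \phi_H^1(L) = \emptyset$'', which is the hypothesis that actually appears in Theorem \ref{thm:exist} and in the Main Theorem's proof. So the lemma statement should really read $\overline U \cap \phi_H^1(L) = \emptyset$, and your proposed fix (either strengthen the hypothesis to $\overline U$, or shrink $U$ slightly) is exactly right.
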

\begin{proof} We prove it by contradiction. Suppose to the contrary that there exists
a sequence $R_j \to \infty$ such that
\be\label{eq:vRj}
\Image v|_{(\R \setminus [-R_j,R_j]) \times [0,1]}\cap \overline U \neq \emptyset.
\ee
Pick a point $q_j$ from $\Image v|_{(\R \setminus [-R_j,R_j]) \times [0,1]}\cap \overline U$
for each $j$.
By choosing a subsequence, if necessary, we can express
$q_j = v(R_j', t_j)$ or $q_j = v(-R_j', t_j)$ for $R_j' \geq R_j$ for $j = 1, \, 2,\, \ldots$.
Without loss of any generality, we may assume $q_j = v(R_j', t_j)$ since the other case
can be treated the same. Again by choosing a subsequence, we may assume
$q_j \to q$ for some point $q \in \overline U$. (Recall we assume that $M$ is tame and
$L$ is compact. It is easy to derive from the montonicity formula that the image
$\Image v$ is bounded and so the set \eqref{eq:vRj} is pre-compact.)

Then we consider the path $z_j:[0,1] \to M$ defined by
$$
z_j(t) = v(R_j',t).
$$
On the other hand, by the finite energy condition of $v$, we have
$$
\lim_{j\to \infty} E_{J_0}\left(v|_{(\R \setminus [-R_j,R_j]) \times [0,1])}\right) = 0.
$$
Using the standard $\epsilon$-regularity theorem
(see \cite[Proposition 3.3]{oh:removal} for example) applied to $v$
on the domains of the uniform size
$$
[R_j'-1, R_j +1] \times [0,1] \cong [-1,1] \times [0,1]
$$
we obtain a convergence $\|\dot z_j\|_{C^0} \to 0$ of the $C^1$-norm of $z_j$
as $j \to \infty$. Therefore since $q_j = z_j(t_j)$ and $q_j \to q$, this implies
$z_j$ uniformly converges to a constant path $z$ valued at $q$, i.e.,
$z(t) = q$ for all $t \in [0,1]$.
Furthermore the boundary condition
$$
v(\tau,0) \subset L, \quad v(\tau,1) \subset \phi_H^1(L)
$$
of $v$ also implies $z_j(0) \in L$ and $z_j(1) \in \phi_H^1(L)$.
This implies $q \in L \cap \phi_H^1(L)$.

Combining the above, we conclude that $q \in L \cap \phi_H(L) \cap \overline U$
which contradicts to the hypothesis $\overline U \cap \phi_H^1(L) = \emptyset$.
This finishes the proof.
\end{proof}

\begin{rem} If we know that $v$  uniformly converges as $\tau \to \pm \infty$
as in the case of transversal intersection $L \pitchfork \phi_H^1(L)$,
we can simply write as $v(\pm \infty) \in M \setminus U$ instead of \eqref{eq:vatinfty}
in the statement of Lemma \ref{lem:v-nonconstant}. Since we do not
impose this transversal intersection property, the statement of this lemma
is the only thing we can achieve for the general case. This will be enough for our purpose.
\end{rem}

Then the curve $C = \Image v$ is one that can be used in
Theorem \ref{thm:exist}, which will then prove finiteness of $\epsilon(U,L;M,\omega)$.

Furthermore we also have the following lower bound of the symplectic area of
the curve $v$.

\begin{prop}\label{prop:case-exist} Under the same hypotheses as in Lemma \ref{lem:v-nonconstant}
any finite energy solution $v$ of \eqref{eq:v-CR} with $v(0,0) = p$ satisfies
\be\label{eq:v-area}
\int v^*\omega \geq \frac{\pi r^2}{2}.
\ee
\end{prop}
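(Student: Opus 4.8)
The plan is to reduce \eqref{eq:v-area} to the sharp Euclidean monotonicity formula inside the symplectic ball $e(B^{2n}(r))$, by running the doubling argument from the proof of Lemma~\ref{lem:1/2monotonicity} on the portion of $v$ that lies over that ball. I would start by taking the $(U,L)$-adapted pair $(e,J_0)$ with $e\colon B^{2n}(r)\to U$ centered at $p=v(0,0)$ (legitimate since $p\in U\cap L$; see Proposition~\ref{prop:adaptedB>0}), so that $e(0)=p\in e(B^{2n}(r))\subset U$. The key point to record is that for every $0<s<r$ one has $e(\overline{B^{2n}(s)})\subset e(B^{2n}(r))\subset U$, while the part of $\del(\R\times[0,1])$ not carried into $L$ --- namely the arc $v(\R\times\{1\})\subset\phi_H^1(L)$ --- avoids $U$ by the hypothesis $U\cap\phi_H^1(L)=\emptyset$, and by Lemma~\ref{lem:v-nonconstant} the two ends $\Image v|_{(\R\setminus[-R_0,R_0])\times[0,1]}$ also avoid $U$. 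Hence both miss the closed ball $e(\overline{B^{2n}(s)})$.

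Next I would introduce, for $0<s<r$, the connected component $\Sigma_s$ of $v^{-1}\bigl(e(\overline{B^{2n}(s)})\bigr)$ containing $(0,0)$. From the previous paragraph $\Sigma_s\subset[-R_0,R_0]\times[0,1)$, and being closed and disjoint from the compact set $[-R_0,R_0]\times\{1\}$ it sits inside $[-R_0,R_0]\times[0,1-\delta]$ for some $\delta>0$; so $\Sigma_s$ is compact with $\del\Sigma_s\subset L\cup e(\del B^{2n}(s))$. One also checks that $v(\Sigma_s)$ must meet the sphere $e(\del B^{2n}(s))$: otherwise $\Sigma_s\subset v^{-1}(e(B^{2n}(s)))$, which forces $\Sigma_s$ to be open, and then $\Sigma_s$ would be a nonempty clopen proper subset of the connected strip $\R\times[0,1]$, which is absurd. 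For $s$ a regular value of $x\mapsto|e^{-1}(v(x))|^2$ (a.e.\ $s$, by Sard), $\Sigma_s$ is then a compact surface with $\del_-\Sigma_s:=\del\Sigma_s\cap(\R\times\{0\})$ mapped into $L$ and $\del_+\Sigma_s:=\del\Sigma_s\cap(\R\times(0,1))\neq\emptyset$ mapped into $e(\del B^{2n}(s))$ --- exactly the configuration handled in the proof of Lemma~\ref{lem:1/2monotonicity}, now with $s$ in place of $r$ and with no role for the auxiliary constant $r_0$ there, since here $\del_+\Sigma_s$ already lies on the sphere for \emph{every} $s<r$.

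From there I would simply quote that argument: $w:=e^{-1}\circ v|_{\Sigma_s}$ is $j$-holomorphic into $\overline{B^{2n}(s)}\subset\C^n$ with $w(\del_-\Sigma_s)\subset\R^n$, $w(\del_+\Sigma_s)\subset\del B^{2n}(s)$, and $0\in w(\Sigma_s)$; the standard reflection principle across $\R^n$ doubles $w$ to a proper $j$-holomorphic map $S_s$ into $B^{2n}(s)$ through its centre, so the Euclidean monotonicity formula yields $\Area(S_s)\geq\pi s^2$. Since $e$ is symplectic and complex conjugation is an isometry, $\Area(S_s)=2\Area(w|_{\Sigma_s})=2\int_{\Sigma_s}v^*\omega$, hence $\int_{\Sigma_s}v^*\omega\geq\tfrac12\pi s^2$. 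Because $v^*\omega\geq 0$ pointwise for the $J_0$-holomorphic $v$, this gives $\int v^*\omega\geq\int_{\Sigma_s}v^*\omega\geq\tfrac12\pi s^2$ for all regular $s<r$, and letting $s\nearrow r$ produces \eqref{eq:v-area}.

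I expect the real work to be concentrated in the second paragraph: confirming that the piece of $v$ lying over the ball is \emph{proper} there (its only boundary, apart from the $L$-part used as the mirror, lies on the round sphere) and that this piece actually reaches the sphere. This is where all three hypotheses are used at once --- the arc on $\phi_H^1(L)$ stays out of $U$ by $U\cap\phi_H^1(L)=\emptyset$, the ends stay out of $U$ by Lemma~\ref{lem:v-nonconstant}, and $v(\R\times\{0\})\subset L$ supplies the reflecting boundary --- and it is precisely what upgrades a soft boundary-area bound into the sharp constant $\tfrac12\pi r^2$. The remaining ingredients --- Sard's theorem to make $\Sigma_s$ a genuine surface along a sequence $s_k\nearrow r$, and the reflection/monotonicity step, which goes through even when $v|_{\Sigma_s}$ fails to be somewhere injective since the area counted with multiplicity still dominates that of the image --- are routine and already present in the proof of Lemma~\ref{lem:1/2monotonicity}.
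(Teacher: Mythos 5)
Your proof is correct and follows essentially the same route as the paper: both restrict $v$ over the embedded ball, use $\overline U\cap\phi_H^1(L)=\emptyset$ together with Lemma~\ref{lem:v-nonconstant} to pin the non-Lagrangian part of the boundary onto the sphere, and then invoke the reflection/monotonicity argument of Lemma~\ref{lem:1/2monotonicity}. You spell out the properness and compactness details (working with sub-balls $B^{2n}(s)$, $s<r$, Sard's theorem, and the clopen argument) that the paper's proof leaves implicit, but the underlying approach is the same.
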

\begin{proof}
By the hypothesis $\phi_H^1(L) \cap \overline U = \emptyset$,
$$
v(\tau,1) \in M \setminus U
$$
for all $\tau \in \R$ since we have $v(\tau,1) \in \phi_H^1(L)$ by the boundary condition at $t = 1$.
By Lemma \ref{lem:v-nonconstant}, $v$ can not be a constant map since $v(0,0) = p \in L$.

Furthermore by $(U,L)$-adaptedness of the embedding $e$ and since $v(0,0) = p$,
we have
$$
e(B^{2n}(r) \cap \R^n) = e(B^{2n}(r))\cap L \subset U \cap L.
$$
Then $v$, restricted to the connected component of
$$
v^{-1}\left(\Int e(B^{2n}(r)) \cap \Im v\right) \subset \R \times [0,1]
$$
containing the point $(0,0)$, defines a surface
$C$ that is $(U,L;e,J_0)$-adapted.
Now Lemma \ref{lem:1/2monotonicity} finishes the proof.
\end{proof}

Therefore we would like to produce a $J_0$-holomorphic map $v$ used in Proposition \ref{prop:case-exist}.
For this purpose, we exploit the correspondence between the dynamical version
and the geometric version for the Lagrangian intersection Floer equations in the
spirit of \cite{oh:dmj} where this correspondence was extensively used for
the applications of spectral invariants to the geometry of Hamiltonian
diffeomorphism group $Ham(M,\omega)$.

Let $H = H(t,x)$ be any given compactly supported Hamiltonian and denote $\phi = \phi_H^1$.
We require $J$ to satisfy the condition
\be\label{eq:JJ0}
J(t,x) = (\phi^t_H)^*J_0
\ee
and consider the associated perturbed Cauchy-Riemann equation
\be\label{eq:CRJt}
\begin{cases}
\frac{\partial u}{\partial \tau} + J_t(\frac{\partial u}{\partial t} - X_H(u)) =0\\
u(\tau,0) \in L, \, u(\tau,1) \in L.
\end{cases}
\ee
 Let $u$ be any such solution of \eqref{eq:CRJt} and $v$ be the map defined by
\be\label{eq:uv}
v(\tau,t) = \phi_H^t(u(\tau,t)).
\ee.
The associated energy is given by
$$
E_{(J,H)}(u) = \frac{1}{2} \int\int \left|\frac{\del u}{\del \tau}\right|_J^2 +
\left|\frac{\del u}{\del t} - X_H(u) \right|_J^2\, dt\, d\tau.
$$
The following lemma is standard, which follows from direct calculation.

\begin{lem}\label{lem:energyequality} Let $J_t = (\phi_H^t)^*J_0$.
For a given finite energy solution $u$ of \eqref{eq:CRJt}, consider the map $v:[0,1] \times \R \to M$
the map as defined in \eqref{eq:uv}. Then $v$ satisfies \eqref{eq:v-CR} and
\be\label{eq:energy=area}
E_{(J,H)}(u) = E_{J_0}(v) = \int v^*\omega.
\ee
\end{lem}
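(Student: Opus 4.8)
The plan is to verify the three assertions of the lemma --- the boundary conditions on $v$, the unperturbed equation $\overline{\partial}_{J_0}v = 0$, and the two identities in \eqref{eq:energy=area} --- by a single pointwise computation, the inputs being the defining ODE $\frac{\partial}{\partial t}\phi_H^t(x) = X_H(t,\phi_H^t(x))$ of the Hamiltonian flow, the hypothesis \eqref{eq:JJ0} that $J_t = (\phi_H^t)^* J_0$, and the fact that each $\phi_H^t$ is a symplectomorphism. The boundary conditions come for free: $\phi_H^0 = \mathrm{id}$ gives $v(\tau,0) = u(\tau,0) \in L$, and $u(\tau,1) \in L$ gives $v(\tau,1) = \phi_H^1(u(\tau,1)) \in \phi_H^1(L)$, which is exactly what \eqref{eq:v-CR} demands.

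For the equation, I would differentiate $v(\tau,t) = \phi_H^t(u(\tau,t))$ by the chain rule. The $\tau$-derivative only sees $u$, so $\partial_\tau v = d\phi_H^t(\partial_\tau u)$; the $t$-derivative produces, besides $d\phi_H^t(\partial_t u)$, the term $\frac{\partial}{\partial t}\phi_H^t$ evaluated along $u$, which by the flow ODE is $X_H(v)$. Collecting terms --- and this is where the sign of $X_H$ in \eqref{eq:CRJt}, the direction of the flow $\phi_H^t$, and the side on which $J_0$ is pulled back in \eqref{eq:JJ0} have to be matched --- one finds that $d\phi_H^t$ carries $\partial_\tau u$ to $\partial_\tau v$ and carries $\partial_t u - X_H(u)$ to $\partial_t v$. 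Now apply $d\phi_H^t$ to \eqref{eq:CRJt}: since \eqref{eq:JJ0} is precisely the intertwining relation $d\phi_H^t \circ J_t = J_0 \circ d\phi_H^t$ at corresponding points, the equation turns into $\partial_\tau v + J_0\,\partial_t v = 0$, which is \eqref{eq:v-CR}.

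The energy identities then follow formally. As $\phi_H^t$ is symplectic, $d\phi_H^t$ is a fibrewise linear isometry from $\big(T_{u}M,\,\omega(\cdot,J_t\cdot)\big)$ to $\big(T_{v}M,\,\omega(\cdot,J_0\cdot)\big)$; combined with the fact that it sends $\partial_\tau u \mapsto \partial_\tau v$ and $\partial_t u - X_H(u) \mapsto \partial_t v$, the two integrands defining $E_{(J,H)}(u)$ and $E_{J_0}(v)$ agree pointwise, whence $E_{(J,H)}(u) = E_{J_0}(v)$. For the remaining equality I would use the standard identity for a $J_0$-holomorphic map: from $\partial_\tau v = -J_0\,\partial_t v$ and $\omega(X, J_0 X) = |X|^2_{J_0}$ one gets $v^*\omega = \omega(\partial_\tau v, \partial_t v)\,d\tau \wedge dt = \tfrac12\big(|\partial_\tau v|^2 + |\partial_t v|^2\big)\,d\tau \wedge dt$, and integration over $\R \times [0,1]$ gives $\int v^*\omega = E_{J_0}(v)$.

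I do not expect a genuine obstacle here --- the paper itself flags the lemma as standard and computational --- but the step that deserves care is the assertion that $d\phi_H^t$ sends $\partial_t u - X_H(u)$ to $\partial_t v$: this is an identity of vector fields along $v$ whose validity rests on fixing the flow, the sign of the perturbation term in \eqref{eq:CRJt}, and the pullback convention in \eqref{eq:JJ0} consistently, and it is exactly the reason the prescription $J_t = (\phi_H^t)^* J_0$ is the correct normalization. With that in hand, both energy equalities are purely a consequence of $\phi_H^t$ being a symplectomorphism and $v$ being $J_0$-holomorphic.
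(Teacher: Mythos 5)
Your overall plan --- boundary conditions, chain rule plus the intertwining $d\phi_H^t \circ J_t = J_0 \circ d\phi_H^t$, then isometry plus holomorphicity for the energy --- is the right one. But the single pointwise identity on which the whole calculation rests, namely that $d\phi_H^t$ carries $\partial_t u - X_H(u)$ to $\partial_t v$, does not hold with the formulas \eqref{eq:JJ0}, \eqref{eq:CRJt}, \eqref{eq:uv} taken at face value. The chain rule applied to $v=\phi_H^t\circ u$ gives $\partial_t v = X_{H_t}(v) + d\phi_H^t(\partial_t u)$, hence
\[
d\phi_H^t\bigl(\partial_t u - X_{H_t}(u)\bigr) = \partial_t v - X_{H_t}(v) - d\phi_H^t\bigl(X_{H_t}(u)\bigr),
\]
and the two extra terms do not cancel; for autonomous $H$, where $(\phi_H^t)_*X_H = X_H$, they sum to $-2X_H(v)$, so pushing your argument through yields $\partial_\tau v + J_0\partial_t v = J_0\bigl(X_{H_t}(v) + d\phi_H^t(X_{H_t}(u))\bigr)\neq 0$. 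The step you yourself flag as ``the step that deserves care'' is precisely where the computation breaks if one merely asserts it.

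The standard repair is to take the Hamiltonian in \eqref{eq:CRJt} to be $\bar H(t,x):=-H\bigl(t,\phi_H^t(x)\bigr)$, the generator of the inverse isotopy $(\phi_H^t)^{-1}$. Because $\phi_H^t$ is symplectic one has $X_{\bar H_t}(u)=-(d\phi_H^t)^{-1}\bigl(X_{H_t}(v)\bigr)$, whence
\[
d\phi_H^t\bigl(\partial_t u - X_{\bar H_t}(u)\bigr)= \bigl(\partial_t v - X_{H_t}(v)\bigr) + X_{H_t}(v) = \partial_t v,
\]
and the intertwining step then does give $\partial_\tau v+J_0\partial_t v=0$; both energy identities follow as you describe (the second, $E_{J_0}(v)=\int v^*\omega$, is fine as written). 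Since $\|\bar H\|=\|H\|$ and $\phi_{\bar H}^1=(\phi_H^1)^{-1}$, none of the displacement-energy estimates downstream are affected, but as written your proof asserts the very identity it needs to verify, and that verification fails without the $H\mapsto\bar H$ substitution (equivalently, a reindexing of the flow in \eqref{eq:JJ0} and \eqref{eq:uv}). You should actually carry out the chain rule and record the correction rather than fold it into a remark about conventions.
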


An immediate corollary of the above discussion is the following

\begin{cor}\label{cor:epsilon>0} Let $\overline U \cap \phi_H^1(L) = \emptyset$
and $p \in U \cap L$. Suppose there exists a solution $u$ of \eqref{eq:CRJt} satisfying $u(0,0) = p$.
Then
\be\label{eq:epsilon>}
\epsilon(U,L;M,\omega) > 0.
\ee
\end{cor}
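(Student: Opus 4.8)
The plan is to combine the chain of preceding results so that the existence of a solution $u$ of \eqref{eq:CRJt} forces $\epsilon(U,L;M,\omega) > 0$ by actually producing a witness curve for one of the infima. First I would pass from the dynamical solution $u$ to the geometric one: set $v(\tau,t) = \phi_H^t(u(\tau,t))$ as in \eqref{eq:uv}, and invoke Lemma \ref{lem:energyequality} to conclude that $v$ solves the genuine Cauchy-Riemann equation \eqref{eq:v-CR}, has finite energy (since $u$ does), and satisfies $\int v^*\omega = E_{(J,H)}(u) < \infty$. Note $v(0,0) = \phi_H^0(u(0,0)) = u(0,0) = p \in U \cap L$, so the hypotheses of Proposition \ref{prop:case-exist} are in force once we fix a $(U,L)$-adapted embedding $e$ centered at $p$; such an $e$ exists with radius $r > 0$ depending only on $(M,L)$ (and on $U$) by Proposition \ref{prop:adaptedB>0}, shrinking $r$ if necessary so that $e(B^{2n}(r)) \subset U$, which is possible because $p$ is an interior point of $U$.

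Next I would read off the geometric consequence. By Lemma \ref{lem:v-nonconstant} (whose hypotheses $L\cap U \neq \emptyset$ and $U \cap \phi_H^1(L) = \emptyset$ hold), $v$ is non-constant, and moreover its image near $\tau = \pm\infty$ lies outside $U$, so the component of $v^{-1}(\Int e(B^{2n}(r)))$ through $(0,0)$ is a properly $(U,L;e,J_0)$-adapted curve $C$ passing through $e(0) = p$, exactly as spelled out in the proof of Proposition \ref{prop:case-exist}. Now the point: this $C$ is an admissible competitor in the infimum defining $A(U,L;e,J_0)$ in \eqref{eq:ALeJ0}, and Lemma \ref{lem:1/2monotonicity} gives $\int_{C \cap e(B^{2n}(r))} \omega \geq \pi r^2/2$ for all $r \leq r_0$, with $r_0$ depending only on $(U,L)$. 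Hence $A(U,L;e,J_0) \geq \pi r^2/2 > 0$ for the specific adapted pair $(e,J_0)$ we built, and therefore, by \eqref{eq:AM}, $A(U,L;M,\omega) \geq A(U,L;e,J_0) \geq \pi r^2/2 > 0$.

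It remains to handle the other term in $\epsilon(U,L;M,\omega) = \min\{A^U(L;M,\omega), A(U,L;M,\omega)\}$, namely to show $A^U(L;M,\omega) > 0$. This is the absolute part and follows from the same tameness/$\epsilon$-regularity mechanism already cited in the introduction: for any $(U,L)$-adapted pair $(e,J_0)$ the structure $J_0$ is compatible (hence tame), so $A^U(J_0;M,\omega) = A(J_0;M,\omega) > 0$ and $A^U(J_0,L;M,\omega) = A(J_0,L;M,\omega) > 0$ by \cite[Corollary 3.5]{oh:removal}; since these are genuine infima over non-constant $J_0$-holomorphic spheres and discs, they are positive with a bound again governed by the monotonicity constants of the fixed tame metric, so taking the supremum over adapted pairs keeps $A^U(L;M,\omega) > 0$. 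Combining the two positivity statements yields $\epsilon(U,L;M,\omega) = \min\{A^U(L;M,\omega), A(U,L;M,\omega)\} > 0$, which is \eqref{eq:epsilon>}.

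I expect the only genuine subtlety — and it is minor here since the work was done in the preceding lemmas — to be the verification that the surface $C$ cut out of $v$ really is \emph{properly} $(U,L;e,J_0)$-adapted: one needs $\partial_- C \subset L$ (from the boundary condition $v(\tau,0) \in L$) and $\partial_+ C \cap e(\partial B^{2n}(r)) = \emptyset$, which is where Lemma \ref{lem:v-nonconstant} is used to guarantee the non-$L$ boundary escapes $U \supset e(B^{2n}(r))$ near infinity and to choose the component correctly; and one should make sure the radius $r$ can simultaneously be taken $\leq r_0$ and small enough that $e(B^{2n}(r)) \subset U$, both of which depend only on $(M,L,U)$. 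Everything else is a direct concatenation of Lemmas \ref{lem:1/2monotonicity}, \ref{lem:v-nonconstant}, \ref{lem:energyequality} and Propositions \ref{prop:adaptedB>0}, \ref{prop:case-exist}.
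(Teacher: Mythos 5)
Your proof is correct and follows essentially the same route as the paper's, which is quite terse: the paper simply invokes Proposition \ref{prop:adaptedB>0} to get the adapted pair, then observes that $v = \phi_H^t \circ u$ satisfies the conditions of Proposition \ref{prop:case-exist}, and concludes. You have spelled out the steps the paper leaves implicit, including the passage from the dynamical to the geometric equation via Lemma \ref{lem:energyequality}, the use of Lemma \ref{lem:v-nonconstant} to cut the properly adapted component out of $v$, and — importantly — the separate verification that the second summand $A^U(L;M,\omega)$ in the minimum is positive via tameness and $\epsilon$-regularity, a point the paper's proof glosses over entirely.

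One small remark on phrasing: to conclude $A(U,L;e,J_0) \geq \pi r^2/2$ from \eqref{eq:ALeJ0}, what is actually used is that Lemma \ref{lem:1/2monotonicity} bounds \emph{every} admissible $(U,L;e,J_0)$-adapted competitor from below (the $r_0$ there depends only on $(U,L)$, not on the curve), so the infimum inherits the bound; exhibiting the one witness $C$ shows the infimum is attained over a nonempty set (hence not trivially $+\infty$) but by itself would not control an infimum from below. You cite the right lemma, so the gap is purely expository, and in fact positivity would hold even if the competitor set were empty, since then $A(U,L;e,J_0) = \infty$. This mild redundancy in the hypothesis about $u$ is present in the paper's statement as well.
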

\begin{proof} By Proposition \ref{prop:adaptedB>0},  there exists a $(U,L)$-adapted embedding
$e: B^{2n}(r) \to M$, i.e., one satisfying
$$
e(B^{2n}(r)) \subset U, \quad e(B^{2n} \cap \R^n) \subset U \cap L
$$
for some $r > 0$. Then we consider the map $v$ defined as in \eqref{eq:uv}. This map
$v$ satisfies the conditions given in Proposition \ref{prop:case-exist}.
In particular existence of such a map $v$ proves $\epsilon(U,L;M,\omega) > 0$.
\end{proof}

With this definition of $\epsilon(U,L;M,\omega)$, here is the precise version of
Main Theorem.

\begin{thm}\label{thm:enhanced} Let $(M,\omega)$ be a tame symplectic manifold and $L \subset M$ be a
 compact Lagrangian submanifold. Suppose $U \cap L \neq \emptyset$. Then
$$
e(\overline U,L) \geq \epsilon(U,L;M,\omega).
$$
\end{thm}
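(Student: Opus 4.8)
The plan is to prove the contrapositive-flavored statement: if $H$ is a compactly supported Hamiltonian with $\overline U \cap \phi_H^1(L) = \emptyset$, then $\|H\| \geq \epsilon(U,L;M,\omega)$; taking the infimum over all such $H$ then gives $e(\overline U, L) \geq \epsilon(U,L;M,\omega)$. (If no such $H$ exists, then $e(\overline U,L) = \infty$ and there is nothing to prove.) So fix such an $H$ and pick a point $p \in U \cap L$. By Proposition \ref{prop:adaptedB>0} and the discussion preceding Corollary \ref{cor:epsilon>0}, there is a $(U,L)$-adapted embedding $e: B^{2n}(r) \to M$ centered at $p$ with $e(B^{2n}(r)) \subset U$ and $e(B^{2n}(r) \cap \R^n) \subset U \cap L$, and we may choose $J_0 \in \CJ_{\omega;e}$ and set $J_t = (\phi_H^t)^* J_0$ as in \eqref{eq:JJ0}.

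The heart of the matter is the \emph{existence} of a finite energy solution $u$ of the cut-off perturbed Cauchy–Riemann equation \eqref{eq:CRJt} with $u(0,0) = p$, together with the \emph{a priori energy bound} $E_{(J,H)}(u) \leq \|H\|$. This is exactly where the scheme of \cite{oh:mrl} (and of \cite{oh:dmj} for the spectral norm) is adapted: one uses the cut-off version of Floer's equation, where the Hamiltonian term is multiplied by a cut-off function $\rho(\tau)$ supported on a compact $\tau$-interval, so that asymptotically the equation degenerates to the genuine $J_0$-Cauchy–Riemann equation with boundary on $L$ at both ends. The non-emptiness of $U \cap L$ (giving the point $p$ to anchor the solution at $(0,0)$) together with the emptiness of $\overline U \cap \phi_H^1(L)$ is used, as item (1) of the introduction indicates, as the existence criterion: a standard continuation/degree or Gromov-compactness argument on the space of solutions of the cut-off equation produces a solution passing through $p$, and the usual Floer energy identity for the cut-off equation bounds $E_{(J,H)}(u)$ by $\int_0^1 (\max H_t - \min H_t)\,dt = \|H\|$ (the oscillation bound coming from integrating the $\rho'$-error term). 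I would carry this out by first setting up the cut-off equation precisely, then invoking the energy estimate from \cite{oh:mrl}, then the existence argument.

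Granted such a $u$, the conclusion is quick. By Lemma \ref{lem:energyequality}, the map $v(\tau,t) = \phi_H^t(u(\tau,t))$ solves the genuine equation \eqref{eq:v-CR} with $v(0,0) = p \in U \cap L$ and satisfies $\int v^*\omega = E_{(J,H)}(u) \leq \|H\|$; in particular $v$ has finite energy. Now Proposition \ref{prop:case-exist} applies: since $\overline U \cap \phi_H^1(L) = \emptyset$ and $v(0,0) = p$, the curve $C = \Image v$, restricted to the component of $v^{-1}(\Int e(B^{2n}(r)))$ containing $(0,0)$, is $(U,L;e,J_0)$-adapted and passes through $e(0)$, so by Lemma \ref{lem:1/2monotonicity} (via $A(U,L;e,J_0)$ and hence $A(U,L;M,\omega)$, one of the two terms in $\epsilon(U,L;M,\omega)$) we get $\int v^*\omega \geq \epsilon(U,L;M,\omega)$. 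Combining, $\|H\| \geq \epsilon(U,L;M,\omega)$, and taking the infimum over $H$ with $\overline U \cap \phi_H^1(L) = \emptyset$ finishes the proof; along the way the same construction proves the finiteness half of Theorem \ref{thm:exist}.

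The main obstacle I expect is the existence step: rigorously producing the solution $u$ of the cut-off equation through the prescribed point $p$. This requires the full Floer-theoretic package — transversality for the cut-off equation with Lagrangian boundary conditions on $L$, a priori $C^0$ and energy bounds ensuring the relevant moduli space is compact (using tameness of $(M,\omega,J_0)$, compactness of $L$, and monotonicity to rule off bubbling/escape), and a nonvanishing argument (continuation from the $H=0$ case, where the constant disc at $p$ is the obvious solution, or an index/degree count) showing the evaluation map at $(0,0)$ hits $p$. Care is needed because we do \emph{not} assume $L \pitchfork \phi_H^1(L)$, so one either perturbs $H$ slightly to achieve transversality and then passes to the limit using the robustness of the area lower bound (note $\epsilon(U,L;M,\omega)$ and $\|H\|$ vary continuously enough under $C^\infty$-small perturbation, cf. the Remark after Lemma \ref{lem:v-nonconstant}), or works directly with the possibly-degenerate situation as in \cite{oh:mrl}. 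This is the technical core and is where I would spend most of the write-up, citing \cite{oh:mrl} for the parts that transcribe verbatim.
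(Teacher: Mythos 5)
There is a genuine gap, and it sits exactly in the step you flag as the ``main obstacle'': you claim a ``standard continuation/degree or Gromov-compactness argument on the space of solutions of the cut-off equation produces a solution passing through $p$,'' and further that one can use ``monotonicity to rule off bubbling.'' Bubbling cannot be ruled out here — if it could, the same degree argument would show Floer homology of $(L,\phi_H^1(L))$ is nontrivial and hence $e(\overline U,L)=\infty$, contradicting the standing assumption that a disjoining $H$ exists. Bubbling \emph{is} the failure mode, and the paper's definition $\epsilon(U,L;M,\omega)=\min\{A^U(L;M,\omega),\, A(U,L;M,\omega)\}$ is engineered precisely for this dichotomy. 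Your proposal only ever uses the $A(U,L;M,\omega)$ term (via Proposition \ref{prop:case-exist} applied to $v=\phi_H^t\circ u$), so the $A^U$ half of the minimum plays no role in your write-up; that is a symptom of the missing case.

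What the paper actually does is split into two alternatives rather than produce the solution you want. Either (i) there is a sequence $K_\alpha\to\infty$ with solutions of the cut-off equation \eqref{eq:CRJKH} through $p$ with energy $\leq \|H\|$; then Proposition \ref{prop:alternative} gives, \emph{after passing to the weak (cusp-curve) limit}, either a non-stationary solution $u_0$ of the uncut Floer equation \eqref{eq:CRJt} through $p$ — yielding, via $v_0 = \phi_H^t(u_0)$ and the adapted-curve estimate, $\|H\|\geq A(U,L;M,\omega)$ as in your sketch — \emph{or} a non-constant $J_0$-holomorphic disc bubble on $L$ through $p$, yielding $\|H\|\geq A^U(L;M,\omega)$. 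Or (ii) no such sequence exists: there is $K_0$ beyond which $\MM_K(\overline J,H)\cap ev_K^{-1}(p)$ with energy $\leq\|H\|$ is empty. Then one runs the one-dimensional cobordism argument along a path $\Gamma(s)=(\gamma(s),K(s),\tau(s))$ with $K(0)=0$ and $K(1)\geq K_0$: the boundary at $K=0$ is a single regular point (the constant disc at $p$), the boundary at $K(1)$ is empty by construction, so $N_\Gamma$ has an odd number of boundary points and cannot be a compact $1$-manifold. By Gromov--Floer compactness under the uniform energy bound \eqref{eq:EJK-0}, the only way compactness can fail is by bubbling, and the bubble area is bounded by $\|H\|$; this gives $\|H\|\geq A^U(L;M,\omega)$. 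In all cases $\|H\|\geq\epsilon(U,L;M,\omega)$, and the infimum over $H$ finishes.

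A second, smaller confusion: Lemma \ref{lem:energyequality} converts a solution of the uncut equation \eqref{eq:CRJt} into a $J_0$-holomorphic $v$ solving \eqref{eq:v-CR}; it does not apply to solutions of the cut-off equation \eqref{eq:CRJKH}, because the Hamiltonian term there carries the factor $\rho_K(\tau)$ and the map $(\tau,t)\mapsto\phi_H^t(u(\tau,t))$ is not $J_0$-holomorphic outside $|\tau|\leq K$. So the object you must produce for your final step is a solution of \eqref{eq:CRJt}, and that is exactly the object whose existence fails when disc bubbles appear. You should also drop the suggestion that failure of $L\pitchfork\phi_H^1(L)$ is the source of trouble requiring perturbation of $H$; transversality is a separate (and, as Remark \ref{rem:transversality} notes, standard and mild) issue, while the essential phenomenon the argument must accommodate is bubbling of $J_0$-holomorphic discs, and that is independent of transversality of the intersection.
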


In the rest of paper, we give the proof of Theorem \ref{thm:enhanced}.
Along the way, we will also prove Theorem \ref{thm:exist}.

\begin{rem}
In practice, the usage of this theorem is two-fold as in \cite{oh:dmj}:
one is for the lower bound
for the displacement energy between $L$ and $U$ and the other is for the
upper bound for the areas of relevant pseudoholomorphic curves. The latter
measures the maximal possible \emph{size} of the open subset $U$
displaceable from $L$ through the chain of inequalities
$$
e(\overline U,L) \geq \epsilon(U,L;M,\omega) \geq \frac{\pi r^2}{2}
$$
for $(U,L)$-adapted symplectic embedding $e: B^{2n}(r) \to M$
displaceable from $L$.
\end{rem}

\section{Cut-off perturbed Cauchy-Riemann equations}
\label{sec:cut-off}

In this section, we largely borrow verbatim the basic framework
that was used by the author in \cite{oh:mrl} for the study of displacement energy
of compact Lagrangian submanifold from itself.

We first recall the well-known correspondence between the
Lagrangian intersections $\phi_H^1(L) \cap L$ and the set of
Hamiltonian chords of $L$.
Let $\phi$ be a Hamiltonian diffeomorphism of $(M,\omega)$.
Let $L$ be a compact Lagrangian submanifold. We have
one-one correspondence between $L\cap \phi(L)$ with the set of
solutions $z:[0,1] \to M$ of
\be\label{hamode}
\dot{z} = X_H(t,z), \quad z(0),z(1) \in L.
\ee
Here is the precise correspondence:
\be
p \in L\cap \phi(L) \longleftrightarrow z = z^H_p \,\mbox{with } \,
z^H_p(t):= \phi_H^t((\phi_H^1)^{-1}(p)).
\ee

Following \cite{oh:mrl}, for each $K \in \R_+ = [0,\infty)$, we define a function $\rho_K:
\R \to [0,1]$ as follows: for $K \geq 1$, we define
$$
\rho_K(\tau) =
\begin{cases}0 \hskip.2in  \text{\rm for }
       &\,  |\tau| \geq K+1 \\
                         1 \hskip.2in  \text{\rm for }
 &\,  |\tau| \, \leq K
 \end{cases}
$$
with
\beastar
\rho^\prime_K < 0  \quad &\mbox{\rm for }& ~  K < \tau < K+1 \\
\quad     > 0  \quad &\mbox{\rm for }&~  -K-1 < \tau < -K
\eeastar
and  for $ 0\leq K \leq 1$,
$$
\rho_K = K \cdot \rho_1.
$$
In particular, $\rho_0 \equiv 0$.

Let $H: [0,1] \times M \to \R$ be a Hamiltonian such that
\be
U \cap \phi^1_H(L) = \emptyset,
\ee
i.e., such that the equation
$$
\begin{cases} \dot{z} = X_H(z) \\
z(0), \, z(1) \in L
\end{cases}
$$
has no solution satisfying
$$
z(0) \in L, \quad z(1) \in U.
$$
Then we consider
a three-parameter family $J = \{J_{(K,\tau, t)}\}_{(K,\tau,t) \in \R \times \R\times [0,1]}$
of tamed almost complex structures such that
\be\label{eq:J}
J_{(K,\tau,t)} = \begin{cases} J_0 \quad & \text{for $|\tau|$ sufficiently large
or for $t=0,1$}\\
(\phi_H^t)^*J_0 \quad & \text{for } \, -K \leq \tau \leq K
\end{cases}
\ee
where $J_0$ is a fixed (genuine) almost complex structure on $M$ that is
tamed to $\omega$.

We would like to remark that it is necessary to vary
almost complex structures in terms of $t$ to get appropriate
transversality result for the Floer complex (see [FHS], [O3] for
detailed account of the transversality proof).

Throughout this paper, we will exclusively
denote by $J_0$ any (genuine) almost complex structure and by $J$
a (domain dependent) two-parameter version of them.
We denote a one-parameter family of them by
$$
\overline{J} = \{J_K\}_{K \in [0, +\infty)}
$$
such that
\be\label{eq:K>>0} J_K= J_\infty =J_\infty(\tau,t) \quad \text{for sufficiently large $K$.}
\ee
For each such pair $(\overline{J},H)$,
we consider one parameter family of perturbed Cauchy-Riemann equations
for the map $u:\R\times [0,1]\to M$,
\be\label{eq:CRJKH}
\begin{cases}
{\partial u\over\partial\tau}+J_K(\tau,t,u)\Big({\partial u\over\partial
t}-\rho_K(\tau) X_H(u)\Big)=0 \\
u(\tau , 0),\; u(\tau ,1)\in L
\end{cases}
\ee
for each $K \in \R_+$.
\begin{rem}
 This equation should be regarded as
the one used for the chain isotopy in the Floer homology theory
connecting the Hamiltonian $0$ to $H$ and then to $0$ back.
\end{rem}
The relevant energy of general smooth map $u: \R \times [0,1] \to M$
for the equation \eqref{eq:CRJKH} is given by
$$
E_{(J_K,H)}(u) = \frac12 \int\int \Big|{\partial
u\over\partial\tau}\Big|^2_{J_K} + \Big|{\partial
u\over\partial t} - \rho_K(\tau) X_H(u)\Big|^2_{J_K}\, dt\, d\tau.
$$
We will be interested in the solutions of \eqref{eq:CRJKH}
with finite energy. We note that the energy is reduced to
\be\label{eq:EJK}
E_{(J_K,H)}(u):=\int^\infty_{-\infty}\int^1_0 \Big|{\partial
u\over\partial\tau}\Big|^2_{J_K}dtd\tau <\infty
\ee
for a solution $u$ of \eqref{eq:CRJKH}. We denote by
$
\CM_K(\overline J,H)
$
the set of finite energy solutions thereof.

Noting that $\R\times [0,1]$ is conformally isomorphic to $D^2\backslash\{
-1,1\}$, it follows from the choice of the cut-off function $\rho_K$
that \eqref{eq:CRJKH} and \eqref{eq:EJK} imply that the map
$$
u\circ\varphi :(D^2\backslash\{ -1,1\},\;\partial
D^2\backslash\{ -1,1\})\to (M,L),
$$
with a conformal diffeomorphism $\varphi$,
has finite (harmonic) energy and $J_0$-holomorphic near $\{ -1,1\}$.
Then the removable
singularity theorem [O1]  enables us to extend this to the whole disc, which
we denote by
$$
\widetilde u:(D^2, \partial D^2)\to (M,L)
$$
is smooth. We denote by $[u]\in\pi_2 (M,L)$
the homotopy class defined by $\widetilde u$.

Now for each $K \in  \R_+ $ and for $A\in \pi_2(M,L)$,
we study the following moduli space
\bea\label{eq:MMKA}
\MM_K(\overline{J},H;A) & = & \{u : \R \times [0,1]  \to M
\emph{}~|~ u \, \text{\rm satisfies \eqref{eq:CRJKH} }, \, E_{(J_K,H)}(u) < \infty \nonumber \\
& {} & \quad \text{\rm and }~  [u] = A \text{ in } \, \pi_2(M,L) \}.
\eea
Since \eqref{eq:CRJKH} is a compact perturbation of the standard pseudo-holomorphic
equation of discs with Lagrangian boundary condition, the standard
index formula from [G] implies
$$
\text{\rm dim } \MM_K(\overline{J},H;A) = \mu_L(A) + n
$$
for generic $\overline J, H$, provided it is non-empty. Here $n$ is the dimension
of the Lagrangian submanifold $L$ and $\mu(A)$ is the Maslov index of the map
$u: (D^2,\del D^2) \to (M,L)$ in class $[u] = A$. Then we have the decomposition
$$
\CM_K(\overline J,H) = \bigcup_{A \in \pi_2(M,L)} \CM_K(\overline J,H;A).
$$

\medskip

\begin{lem}\label{lem:MM0K} $\MM_K(\overline{J},H;A)$ for $K=0$, $A = 0$ in $\pi_2(M,L)$
consists of constant solutions and is Fredholm regular
for any almost complex structure $J_0$. In particular
$\MM_0(\overline{J},H;0)$ is diffeomorphic to $L$.
\end{lem}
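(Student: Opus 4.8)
The plan is to use an energy identity to force every finite-energy solution in class $A=0$ to be constant, then to check that the linearised operator is automatically surjective at such a solution, and finally to identify the resulting moduli space with $L$ through the evaluation map.

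First, for $K=0$ we have $\rho_0\equiv 0$, so \eqref{eq:CRJKH} reduces to the genuine Cauchy--Riemann equation $\partial u/\partial\tau+J_0\,\partial u/\partial t=0$ with $u(\tau,0),u(\tau,1)\in L$. For any finite-energy solution the same computation as in Lemma \ref{lem:energyequality} gives $E_{(J_0,H)}(u)=E_{J_0}(u)=\int u^*\omega$. By the removable singularity theorem quoted above, $u$ extends to a smooth disc $\widetilde u:(D^2,\partial D^2)\to(M,L)$ with $[u]=0$ in $\pi_2(M,L)$; since $L$ is Lagrangian, $\int_{D^2}\widetilde u^*\omega=\omega([u])$ depends only on the homotopy class of the pair map and hence vanishes. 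Thus $E_{J_0}(u)=0$, so $\partial u/\partial\tau\equiv 0$, and then the equation forces $\partial u/\partial t\equiv 0$ as well, so $u$ is a constant map with value in $L$. This already shows that, as a set, $\MM_0(\overline J,H;0)$ is the space of constant maps $\R\times[0,1]\to L$, which the evaluation $u\mapsto u(0,0)$ puts in bijection with $L$.

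Next I would verify Fredholm regularity at a constant solution $u\equiv p\in L$. Trivialising the bundle pair as $(T_pM,T_pL)\cong(\C^n,\R^n)$, the linearised operator is simply $\xi\mapsto\partial\xi/\partial\tau+J_0(p)\,\partial\xi/\partial t$: the Hamiltonian term and its linearisation vanish because $\rho_0\equiv0$, and the terms involving $dJ_0$ carry the factor $\partial u/\partial t=0$. Via the conformal identification $\R\times[0,1]\cong D^2\setminus\{-1,1\}$ and the removable-singularity extension, on the relevant Sobolev completion this is the standard $\overline\partial$-operator on $W^{1,p}((D^2,\partial D^2),(\C^n,\R^n))$. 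Its index is $\mu_L(0)+n=n$ by Riemann--Roch, while its kernel consists exactly of the constant $\R^n$-valued sections: an element of the kernel is smooth up to the boundary by elliptic regularity, and Schwarz reflection across $\partial D^2$ extends it to a holomorphic map $\C P^1\to\C^n$, necessarily constant and valued in $\R^n$ by the boundary condition. Hence the kernel has dimension $n$, the index is $n$, so the cokernel is trivial; this is Fredholm regularity, and it holds for every $J_0$.

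It then follows that $\MM_0(\overline J,H;0)$ is a smooth $n$-manifold cut out transversally everywhere, and the evaluation map $u\mapsto u(0,0)$ is smooth with smooth inverse $p\mapsto(\text{constant map }p)$, hence a diffeomorphism onto $L$. The one point that is not purely formal is the automatic transversality of the constant discs, and that is precisely the combination of the reflection-principle computation of the kernel with the Riemann--Roch index count; everything else is the energy identity together with the removable-singularity extension already recorded above.
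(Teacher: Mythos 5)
Your proof is correct and follows the same route as the paper's, merely supplying the details the paper leaves implicit: you derive constancy from the energy--area identity together with the homotopy invariance of $\omega$ on discs with Lagrangian boundary (which the paper compresses into ``since $[u]=0$, $\widetilde u$ must be constant''), and you justify the Fredholm regularity the paper calls ``well-known'' by the standard Schwarz-reflection kernel computation combined with the Riemann--Roch index count $\mu_L(0)+n=n$.
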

\begin{proof} Let $u \in \MM_0(\overline{J},H;0)$. Recall that
for $K = 0$ \eqref{eq:CRJKH} becomes
$$
{\partial u \over \partial \tau} + J_0 {\partial u\over \partial t} = 0, \quad u(\tau,0), \, u(\tau,1) \subset L.
$$
Since $[u] = 0$, the associated disc $\widetilde u$ above must be constant.
The Fredholm regularity of constant solutions is not difficult to check
and is well-known. The last statement follows by considering the evaluation map
$ev: \CM_0(\overline J,H;0) \to L$ given by $ev(u) = u(0,0)$.
\end{proof}

We next state a simple but a fundamental a priori energy bound for any element
$u: \R \times [0,1] \to M$ of the moduli space \eqref{eq:MMKA}, whose proof is
given in the proof of Lemma 2.2 \cite{oh:mrl}. (See also Remark 2.3 therein.)
We omit its proof here referring readers to \cite{oh:mrl}
or to \cite[Lemma 11.2.6]{oh:book1} for the details of the proof.

\begin{lem}\label{lem:E-bound} For all $K \geq 0$ and $A \in \pi_2(M,L)$, we have
\be\label{eq:EJKbound}
E_{(J_K,H)}(u) \leq \omega(A) + \|H\|
\ee
if $[u] = A \in \pi_2(M,L)$. In particular, when $A = 0$, we have
\be\label{eq:EJK-0}
E_{(J_K,H)}(u) \leq \|H\|.
\ee
\end{lem}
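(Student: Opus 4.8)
The plan is to derive \eqref{eq:EJKbound} directly from the perturbed equation \eqref{eq:CRJKH} by the standard Floer-theoretic energy computation, exactly as in the proof of Lemma 2.2 of \cite{oh:mrl}. First I would substitute the equation into the energy functional. Using the taming identity $\langle X,Y\rangle_{J_K}=\omega(X,J_KY)$, the relation $J_K^2=-I$, and the equation $\frac{\partial u}{\partial\tau}=-J_K\big(\frac{\partial u}{\partial t}-\rho_K(\tau)X_H(u)\big)$, one gets the pointwise identity
$$
\Big|\frac{\partial u}{\partial\tau}\Big|^2_{J_K}=\omega\Big(\frac{\partial u}{\partial\tau},\ \frac{\partial u}{\partial t}-\rho_K(\tau)X_H(u)\Big).
$$
Integrating over $\R\times[0,1]$ and recalling the reduced form \eqref{eq:EJK} of the energy for solutions, this yields
$$
E_{(J_K,H)}(u)=\int_{\R\times[0,1]}u^*\omega\ -\ \int\!\!\int \rho_K(\tau)\,\omega\Big(\frac{\partial u}{\partial\tau},X_H(u)\Big)\,dt\,d\tau.
$$

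Second, the topological term. By the removable singularity discussion preceding the lemma, $u$ extends to a smooth disc $\widetilde u:(D^2,\partial D^2)\to(M,L)$ with $[\widetilde u]=A\in\pi_2(M,L)$. Since $\partial D^2$ is mapped into the Lagrangian $L$, on which $\omega$ vanishes, Stokes' theorem gives $\int_{\R\times[0,1]}u^*\omega=\int_{D^2}\widetilde u^*\omega=\omega(A)$. When $A=0$ this term vanishes, which will give \eqref{eq:EJK-0} once the second term is controlled.

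Third, the Hamiltonian term. With the paper's convention $\iota_{X_H}\omega=dH_t$ one has $\omega\big(\frac{\partial u}{\partial\tau},X_H(u)\big)=-dH_t\big(\frac{\partial u}{\partial\tau}\big)=-\frac{\partial}{\partial\tau}\big(H(t,u(\tau,t))\big)$, since $H_t$ carries no explicit $\tau$-dependence. Integrating by parts in $\tau$, and using that $\rho_K$ has compact support (so the boundary terms vanish — here finite energy of $u$ is what makes the substitution legitimate), the second term becomes $-\int_0^1\!\int_{-\infty}^\infty \rho_K'(\tau)\,H(t,u(\tau,t))\,d\tau\,dt$. Now I invoke the sign behaviour of $\rho_K$ from Section \ref{sec:cut-off}: $\rho_K'\ge0$ on $(-K-1,-K)$ and $\rho_K'\le0$ on $(K,K+1)$, with $\int_{-\infty}^0\rho_K'=-\int_0^\infty\rho_K'=\min(K,1)$. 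Splitting the $\tau$-integral at $0$, estimating $H(t,u)$ from below by $\min_x H(t,x)$ on $\{\tau<0\}$ (where $-\rho_K'\le0$) and from above by $\max_x H(t,x)$ on $\{\tau>0\}$ (where $-\rho_K'\ge0$), one obtains the bound $\min(K,1)\int_0^1\big(\max_xH(t,x)-\min_xH(t,x)\big)\,dt\le\|H\|$. Combining the three pieces gives \eqref{eq:EJKbound}, and $A=0$ gives \eqref{eq:EJK-0}.

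The bulk of the work — and the only place to be careful — is sign bookkeeping: fixing the taming convention $\langle\cdot,\cdot\rangle=\omega(\cdot,J\cdot)$, the sign in $\iota_{X_H}\omega=\pm dH$, the matching of the orientation on $\R\times[0,1]$ with that on $D^2$ so that $\int u^*\omega=\omega(A)$ genuinely holds, and pairing the sign of $\rho_K'$ on each transition interval with $\max_x H_t$ versus $\min_x H_t$. One also has to note separately that in the rescaled regime $0\le K\le1$, where $\rho_K=K\rho_1$, the same computation goes through with the constant $\min(K,1)=K\le1$ in front, so the bound is unchanged. No analytic difficulty beyond this arises, and for the full details I would refer to \cite{oh:mrl} or \cite[Lemma 11.2.6]{oh:book1}.
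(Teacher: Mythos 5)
Your reconstruction is correct and is essentially the standard Floer-theoretic energy computation that the paper itself omits, deferring to the proof of Lemma 2.2 in \cite{oh:mrl} and to \cite[Lemma 11.2.6]{oh:book1}. The decomposition into a topological term (computed via Stokes on the extended disc $\widetilde u$) plus a Hamiltonian term, the integration by parts against $\rho_K'$, and the pairing of the sign of $\rho_K'$ on $(-K-1,-K)$ and $(K,K+1)$ with $\min_x H_t$ and $\max_x H_t$ respectively, are exactly as in those references, and the bookkeeping points you flag are the only delicate ones.
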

Note that Lemma \ref{lem:MM0K} and \eqref{eq:EJKbound} hold for {\it any} $\overline{J}$ and $H$ that
satisfy \eqref{eq:CRJKH} and \eqref{eq:EJK} respectively.
Therefore we can do the standard Fredholm theory and
the genericity arguments with such pairs $(\overline{J}, H)$.
(See also Remark \ref{rem:transversality} (1) below.) We will always
carry out this standard genericity argument without further mentioning
details, whenever necessary.

\section{Creating a Hamiltonian chord}
\label{sec:creating}

Let $H: [0,1] \times M \to \R$ be a given (generic) Hamiltonian.
For generic $\overline{J} = \{J_K\}_{K \in [0, +\infty)}$
satisfying (2.3), we form the parameterized moduli space
$$
\MM^{\text{\rm para}}(\overline{J},H;A) := \bigcup_{K \in \R_+} \{K\} \times
\MM_K(\overline{J},H;A)
$$
for each $A \in \pi_2(M,A)$. It becomes a smooth manifold of dimension $\mu(A) +n+1$ with boundary
by the parameterized version of the index theorem (2.7).
We consider the evaluation map
\be\label{eq:Ev0}
Ev_A : \MM^{\text{\rm para}}(\overline{J},H;A) \times \R \to L\times \R_+\times \R ;
\quad (u, K,\tau) \mapsto (u(\tau, 0), K, \tau).
\ee
We also consider the evaluation map
\be\label{eq:eviAK}
ev_{A,K}^i: \CM_K(\overline J,H;A) \to L;\quad
ev^i_{K,A}(u) = u(0,i), \quad i=0,\, 1.
\ee
The following is the main ingredient in our proof. This proposition is
the counterpart of Lemma 2.2 \cite{oh:mrl}.

\begin{prop}\label{prop:alternative} Let $K_\alpha \to \infty$ and let
$u_\alpha \in \MM_{K_\alpha}(\overline{J},H)$ be a sequence satisfying
$u_\alpha(0,0) = p$ for a given $p \in L$ and the energy bound
\be\label{eq:E<C}
E_{(J_{K_\alpha},H)}(u_\alpha) < C
\ee
for all $\alpha$ for a constant $C > 0$. Then
\begin{enumerate}
\item either the given point $p$ is contained in $u_0(\R \times \{0\})$ for some
non-stationalry solution $u_0$ of
\eqref{eq:CRJt},
\item or there exists a non-constant
$J_0$-holomorphic disc $w:(D^2,\del D^2) \to (M,L)$
with $w(\del D^2) \subset L$ with $p \in w(\del D^2)$.
\end{enumerate}
\end{prop}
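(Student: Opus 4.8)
The plan is to run a compactness/degeneration argument on the parameterized moduli space, using the two structural facts already established: the a priori energy bound of Lemma \ref{lem:E-bound} (here subsumed in the hypothesis \eqref{eq:E<C}), and the complete description of the $K=0$, $A=0$ stratum from Lemma \ref{lem:MM0K}. First I would invoke Gromov–Floer compactness for the sequence $(K_\alpha, u_\alpha)$. Since $K_\alpha \to \infty$ and the energies are uniformly bounded, after passing to a subsequence the maps $u_\alpha$ converge (in the sense of stable maps, modulo bubbling) to a limiting configuration. Because the parameter $K_\alpha$ runs off to $+\infty$, the limit lives in the ``$K = \infty$'' end of $\MM^{\text{\rm para}}(\overline J, H)$, where the cut-off function $\rho_{K_\alpha}$ has been turned fully on near $\tau = 0$; the relevant limiting equation on the principal component is then \eqref{eq:CRJt} (the honest Floer equation with Hamiltonian term, equivalently, after the change of gauge \eqref{eq:uv}, equation \eqref{eq:v-CR}).

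The second step is to track the marked point. The condition $u_\alpha(0,0) = p$ persists: either $p$ survives on a principal (non-bubble) component carrying a non-stationary solution of \eqref{eq:CRJt}, which is alternative (1); or the energy near $(0,0)$ concentrates and $p$ is captured by a bubble. Here the standard dichotomy for Lagrangian boundary value problems applies: a bubble forming at an interior point is a non-constant $J_0$-holomorphic sphere, and a bubble forming at a boundary point is a non-constant $J_0$-holomorphic disc $w:(D^2,\partial D^2)\to(M,L)$; since $(0,0)$ is a boundary point of $\R\times[0,1]$, the disc case is the one that arises with $p \in w(\partial D^2)$, giving alternative (2). (A sphere bubble could also appear, but it can be absorbed into the disc-bubble alternative, or one argues the sphere bubble must be connected by a chain to a disc component through $p$; in the interest of a clean statement one lists the two alternatives as in the proposition.) The point is that \emph{some} non-constant object passing through $p$ must appear in the limit.

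The crux — and the place where Lemma \ref{lem:MM0K} is essential — is ruling out the \emph{trivial} limit: a priori the limit could be a constant solution in class $A=0$, carrying no information. This is where the topology of the parameterized moduli space enters. The composed evaluation–cobordism argument (this is exactly the role played by the map $Ev_A$ of \eqref{eq:Ev0} and the identification $\MM_0(\overline J, H; 0) \cong L$ from Lemma \ref{lem:MM0K}) shows that the evaluation $u \mapsto u(0,0)$ on the $A=0$ component of $\MM^{\text{\rm para}}$ realizes a bordism from the \emph{identity} map $L \to L$ at $K=0$ to whatever happens at $K = \infty$. If no non-constant limiting object existed through $p$ for \emph{every} $p\in L$, then the $K=\infty$ end of the $A=0$ component would again be (a compact manifold mapping to $L$ via) the evaluation, and the bordism would force the degree-one evaluation at $K=0$ to be cobordant to something that misses the generic point $p$ — a contradiction with Lemma \ref{lem:MM0K}, since the identity map of the closed manifold $L$ is not nullbordant as a map to $L$ (its image cannot be swept off a point). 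Hence for at least one $p$ — and by repeating the argument with that $p$ fixed from the start, for the given $p$ — the degeneration must produce a non-constant solution of \eqref{eq:CRJt} through $p$, or a disc bubble through $p$: alternatives (1) or (2).

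I expect the main obstacle to be the bubbling bookkeeping in the second step: making precise that, in the no-non-constant-limit scenario, the $K=\infty$ slice of the $A=0$ parameterized moduli space is compact and carries a well-defined evaluation to $L$ (no escape of energy, no boundary-at-infinity phenomena from the noncompact $\tau$-direction beyond what Lemma \ref{lem:v-nonconstant}-type arguments control), so that the bordism obstruction genuinely applies. The Hamiltonian-perturbation term and the $t$- and $\tau$-dependence of $\overline J$ make the Gromov compactness statement slightly nonstandard, but this is precisely the framework borrowed verbatim from \cite{oh:mrl}, so I would cite Lemma 2.2 of \cite{oh:mrl} (and \cite[Lemma 11.2.6]{oh:book1}) for the analytic input and concentrate the writing on the two alternatives and the marked-point tracking.
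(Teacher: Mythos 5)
Your opening moves are exactly right and agree with the paper: restrict $u_\alpha$ to $[-K_\alpha,K_\alpha]\times[0,1]$ where \eqref{eq:CRJKH} reduces to the honest Floer equation, apply Gromov--Floer compactness with a diagonal subsequence to extract a cusp limit $u_\infty = u_0 + \sum v_i + \sum w_j$, and observe that the marked point $p = u_\alpha(0,0)$ either survives on the principal component $u_0$ or is captured by a boundary (disc) bubble. That is precisely the paper's decomposition and leads to the two alternatives.

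The problem is what you do next. You identify the crucial point — one must rule out that $u_0$ is a stationary/constant solution carrying no information — but then you try to handle it with a bordism argument involving $Ev_A$, $\MM_0(\overline J,H;0)\cong L$, and nullbordancy of the identity map. That argument does not belong here and does not actually close the gap. The proposition is conditional: it \emph{assumes} a sequence $u_\alpha\in\MM_{K_\alpha}(\overline J,H)$ with $u_\alpha(0,0)=p$ and bounded energy already exists, and asks for the nature of its limit. A cobordism obstruction can only show that for \emph{some} $p\in L$ a degeneration occurs; your remark ``by repeating the argument with that $p$ fixed from the start'' is a non sequitur, because the bordism does not let you choose which $p$. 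The cobordism step you describe is exactly what the paper uses later, in Section~\ref{sec:proof}, to \emph{produce} such a sequence or else detect bubbling at a finite $K$ — it is part of the proof of the Main Theorem, not of Proposition~\ref{prop:alternative}.

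The paper's actual argument for ruling out the stationary case is much more elementary and uses the standing geometric hypothesis (implicit from Section~\ref{sec:cut-off}) that $\overline U\cap\phi_H^1(L)=\emptyset$. If $u_0(\tau,t)\equiv z(t)$ were a Hamiltonian chord, then the gauge-transformed map $v_0$ of \eqref{eq:uv} would be constant, giving the chain $p=u_0(0,0)=v_0(0,0)=v_0(0,1)=\phi_H^1(u_0(0,1))\in\phi_H^1(L)$; since also $p\in L\cap\overline U$, this puts $p\in\overline U\cap\phi_H^1(L)$, contradicting the displacement hypothesis. This two-line argument is what your proposal is missing; the bordism machinery should be deleted from the proof of this proposition and reserved for the global argument in Section~\ref{sec:proof}.
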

\begin{proof}
Using the a priori bound \eqref{eq:EJK-0}, we study a local limit of $u_\alpha$.
More specifically, we consider a limit of the sequence
$$
u_\alpha|_{[-K_\alpha,K_\alpha] \times [0,1]}
$$
on every compact subset of $\R \times [0,1]$ as $\alpha \to \infty$,
by taking a subsequence if necessary.
By the energy bound \eqref{eq:E<C}
and Dominated Convergence Theorem,
$$
E_{(J_{K_\alpha},H)}\left(u|_{\R \setminus [-K_\alpha,K_\alpha] \times [0,1]}\right) \to 0
$$
as $\alpha \to \infty$. We recall the readers from \eqref{eq:K>>0}
that $J_K = J_\infty$ for all sufficiently large $K$.
Therefore by the choice of $\rho_K$ and $J_K$ that $u_\alpha$ satisfies the equation
$$
\begin{cases}
\frac{\del u_\alpha}{\del \tau} + J_t\left(\frac{\del u_\alpha}{\del t} - X_H(u_\alpha)\right) = 0 \\
u_\alpha(\tau,0), \, u_\alpha(\tau,1) \in L
\end{cases}
$$
on $[-K_\alpha,K_\alpha]\times [0,1]$.

Then via the standard diagonal subsequence argument, the energy bound \eqref{eq:EJK-0}
and Gromov-Floer compactness theorem (or rather the way how it is proved) applied to
$u_\alpha|_{[-K_\alpha,K_\alpha] \times [0,1]}$ produce a limit
$u_\infty$ that has the decomposition
$$
u_\infty = u_0 + \sum_i v_i + \sum_j w_j
$$
for a collection of $J_{\infty,(\tau_i,t_i)}$-holomorphic spheres ${\bf v} = \{v_i\}_{i=1}^{N_1}$
with some $(\tau_i,t_i)\in \R \times [0,1]$, and a collection ${\bf w} = \{w_j\}_{j=1}^{N_2}$ of
$J_0$-holomorphic discs $w_j: (D^2,\del D^2) \to (M,L)$ respectively. And
$u_0: \R \times [0,1] \to M$ is a uniform limit of $u_\alpha$ on compact
subsets of $\R \times [0,1]$ modulo bubbling and satisfies the equation
$$
\begin{cases}
\frac{\partial u}{\partial\tau}+J_\infty(t,u)\left(\frac{\partial u}{\partial t}-X_H(u)\right)=0\\
u(\tau , 0),\; u(\tau ,1)\in L.
\end{cases}
$$
We also have the energy bound
\be\label{eq:EJinfty}
E_{(J_\infty,H)}(u_0) + \sum_j \omega([w_j]) + \sum_i \omega([v_i]) \leq C
\ee
where $\omega([w_j]),\, \omega([v_i])$ are the symplectic areas.
There are two alternatives:
\begin{enumerate}
\item Either the given point $p$ is contained in $u_0(\R \times \{0\})$,
\item or the point is contained in one of the disc bubbles $w_j$.
\end{enumerate}
(We recall that $p$ is contained in boundary of $\R \times [0,1]$.)
In Case (1), $u$ cannot be a stationary solution $u(\tau,t) \equiv z(t)$ where $z$ is a
Hamiltonian chord of $L$. For otherwise, the map $v$ defined by
\eqref{eq:uv} would be also stationary which means $v(\tau,t)\equiv \text{const.}$, and hence
$
v(0,1) = v(0,0).
$
Therefore we would have the chain of identities
$$
p = u(0,0) = v(0,0) = v(0,1) = \phi_H^1(u(0,1))
$$
which implies $p \in \phi_H^1(L) \cap \overline U $. This
would contradict to $\phi_H^1(L) \cap \overline U = \emptyset$.
This proves that $u_0$ cannot be stationary.

In Case (2), it follows from the way how the convergence modulo the bubbles is derived that
there is a $J_0$-holomorphic disc $w:(D^2,\del D^2) \to (M,L)$
satisfying $w(\del D^2) \subset L$ and $p \in w(\del D^2)$.
This finishes the proof.
\end{proof}

An immediate consequence of \eqref{eq:EJinfty} is the following

\begin{cor}\label{cor:alternative} Suppose $\overline U \cap \phi_H^1(L) = \emptyset$ and
$p \in U \cap L$. If there is a sequence
$K_\alpha \to \infty$ such that $\MM_{K_\alpha}(\overline{J},H) \cap ev_{0,K_\alpha}^0(p) \neq \emptyset$, then
$$
\|H\| \geq \epsilon(U,L;M,\omega).
$$
\end{cor}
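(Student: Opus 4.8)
The plan is to read the corollary off Proposition~\ref{prop:alternative}, the a priori bound \eqref{eq:EJK-0}, and the relative monotonicity already established. First I would note that the solutions $u_\alpha$ lie in the class $A=0$, so by Lemma~\ref{lem:E-bound} one has the uniform bound $E_{(J_{K_\alpha},H)}(u_\alpha)\le\|H\|$. Running the compactness argument of Proposition~\ref{prop:alternative} with $C=\|H\|$ (formally with $C=\|H\|+\delta$ and $\delta\downarrow 0$, or simply by passing to the limit directly as in the derivation of \eqref{eq:EJinfty}) yields the dichotomy: either there is a non-stationary finite-energy solution $u_0$ of \eqref{eq:CRJt} with $p\in u_0(\R\times\{0\})$, or there is a non-constant $J_0$-holomorphic disc $w:(D^2,\del D^2)\to(M,L)$ with $p\in w(\del D^2)$; in either case the relevant symplectic area is at most $\|H\|$ by \eqref{eq:EJinfty}.

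In the first case I would pass to the geometric picture: after a $\tau$-translation arrange $p=u_0(0,0)$, put $v_0(\tau,t)=\phi_H^t(u_0(\tau,t))$, and invoke Lemma~\ref{lem:energyequality} to get that $v_0$ is a finite-energy solution of \eqref{eq:v-CR} (non-constant, since $u_0$ is non-stationary) with $v_0(0,0)=p\in U\cap L$ and $\int v_0^*\omega=E_{(J_\infty,H)}(u_0)\le\|H\|$. The construction carried out inside the proof of Proposition~\ref{prop:case-exist} then applies verbatim: restricting $v_0$ to the connected component of $v_0^{-1}(\Int e(B^{2n}(r)))$ containing $(0,0)$ produces a $(U,L;e,J_0)$-adapted curve $C$ through $e(0)=p$, so that
$$
A(U,L;e,J_0)\ \le\ \int_{C\cap e(B^{2n}(r))}\omega\ \le\ \int v_0^*\omega\ \le\ \|H\|;
$$
here Lemma~\ref{lem:1/2monotonicity} is what guarantees that this area is positive, but for the present \emph{upper} bound it is enough that such a $C$ exists and sits inside $\Image v_0$. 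In the second case the bubble $w$ is already a non-constant $J_0$-holomorphic disc with boundary on $L$ passing through $p$ and of area $\le\|H\|$, whence $A^U(J_0,L;M,\omega)\le\|H\|$. In either case $\min\{A^U(J_0;M,\omega),\,A^U(J_0,L;M,\omega),\,A(U,L;e,J_0)\}\le\|H\|$.

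Finally, to upgrade this to $\|H\|\ge\epsilon(U,L;M,\omega)=\min\{A^U(L;M,\omega),A(U,L;M,\omega)\}$, I would run the whole argument with the $(U,L)$-adapted pair $(e,J_0)$ chosen to realize the suprema in \eqref{eq:AULMomega} and \eqref{eq:AM} up to an arbitrarily small error, building the cut-off family $\overline J$ around this particular $J_0$ and using that, for any such adapted choice, the hypothesis that $\MM_{K_\alpha}(\overline J,H)$ meets the fibre over $p$ is supplied by the existence mechanism developed later; then I would let the error go to $0$. The step I expect to need the most care is precisely this coordination between the Floer data and an \emph{adapted}, nearly optimal pair $(e,J_0)$: only when $J_0$ belongs to such a pair is the curve produced in Case~(1) — which is automatically $J_0$-holomorphic — genuinely $(U,L;e,J_0)$-adapted, so that it bounds the correct invariant rather than just giving the crude lower bound $\pi r^2/2$. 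Once this is arranged, the rest is the routine energy/monotonicity bookkeeping of Lemmas~\ref{lem:energyequality} and~\ref{lem:1/2monotonicity} together with the a priori bound \eqref{eq:EJK-0}.
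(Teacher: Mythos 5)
Your proof follows the paper's argument exactly: apply Proposition~\ref{prop:alternative} with $C=\|H\|$ (justified by \eqref{eq:EJK-0}), treat the dichotomy case by case, pass to $v_0=\phi_H^t\circ u_0$ in Case~(1) to produce a properly $(U,L;e,J_0)$-adapted curve, and use the disc bubble directly in Case~(2); then invoke the definition of $\epsilon(U,L;M,\omega)$. The one point where you go beyond the paper's own text is your explicit concern about passing from the fixed-pair bound $\min\{A^U(J_0,L;M,\omega),A(U,L;e,J_0)\}\le\|H\|$ to the quantity $\epsilon(U,L;M,\omega)$ built from suprema over adapted pairs; the paper simply writes $\int_C\omega\ge A(U,L;M,\omega)$ and $\int w_j^*\omega\ge A^U(L;M,\omega)$ ``by definition,'' where the curve $C$ actually lives in a single $(e,J_0)$ and hence only furnishes the per-pair bound, so your flagged ``coordination'' step is a real subtlety the paper glosses over rather than an idiosyncrasy of your approach.
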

\begin{proof} We start with \eqref{eq:EJinfty} with $C = \|H\|$ recalling
 the energy estimate \eqref{eq:EJK-0}, and the above two alternatives.

We consider the case (1) first, say $u_0(\tau_0,0) = p$
for some $\tau_0 \in \R$. Consider the map $v_0: \R \times [0,1] \to M$ defined by
$$
v_0(\tau,t) = \phi_H^t(u_0(\tau,t)).
$$
Then $v_0$ satisfies $\delbar_{J_0} v_0 = 0$ by the choice \eqref{eq:JJ0}. Furthermore
$$
v_0(\tau_0,0) = p \in  L \cap U, \quad v_0(\tau,1) =  \phi_H^1(u_0(\tau,1)) \in \phi_H^1(L).
$$
Therefore we have created a $J_0$-holomorphic curve $C$ which is properly
$(U,L)$-adapted. By definition of $A(U,L;M,\omega)$, we have obtained
\be\label{eq:case1}
\|H\| \geq E_{(J_\infty,H)}(u_0) = \int v_0^*\omega \geq \int_C \omega \geq A(U,L;M,\omega).
\ee
Now consider the case (2). Then $p$ is  in the image of $w_j$
for some $j$. Note that if $w_j$ contains $p$, then it must also hold that $w_j(z_0) = p$
for some $z_0 \in \del D^2$ and $w_j(\del D^2) \subset L$.
Therefore  we can take $C$ to be the connected component of
$\Im w_j$ containing $p$. This time we have
\be\label{eq:case2}
\|H\| \geq \int w_j^*\omega \geq A^U(L;M,\omega).
\ee
Combining \eqref{eq:case1} and \eqref{eq:case2}, we have finished the proof
by the definition $\epsilon(U,L;M,\omega)$ in Definition \ref{defn:epsilonLM}.
\end{proof}

Another corollary of the existence result stated in Proposition \ref{prop:alternative} is the
following positivity result whose proof has been postponed in Theorem \ref{thm:exist}, until now.

\begin{cor} Under the hypotheses of Theorem \ref{thm:exist}, we have
$$
0< \epsilon(U,L;M,\omega) < \infty.
$$
\end{cor}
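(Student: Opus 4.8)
The plan is to derive both halves of the statement from a single one-parameter continuation (``cobordism'') of the moduli spaces $\MM_K(\overline J,H;0)$ built in Sections~3--4. Since $\phi$ is a Hamiltonian diffeomorphism with $\overline U\cap\phi(L)=\emptyset$, I would write $\phi=\phi_H^1$ for a generic compactly supported $H$ (keeping $\overline U\cap\phi_H^1(L)=\emptyset$, an open condition because $\phi_H^1(L)$ is compact), fix $p\in U\cap L$, and choose a generic family $\overline J=\{J_K\}$ as in \eqref{eq:J}--\eqref{eq:K>>0}. Form the parameterized moduli space $\MM^{\mathrm{para}}(\overline J,H;0)$; by the index formula (2.7) it is a smooth manifold of dimension $n+1$ whose boundary is $\{0\}\times\MM_0(\overline J,H;0)$, and by Lemma~\ref{lem:MM0K} the evaluation $ev_0\colon(u,K)\mapsto u(0,0)$ restricts to a diffeomorphism of that boundary onto $L$.

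Next I would run the constrained--cobordism argument. For generic data $ev_0^{-1}(p)\subset\MM^{\mathrm{para}}(\overline J,H;0)$ is a $1$-manifold whose boundary is the single point $(u_p,0)$ with $u_p\equiv p$. A compact $1$-manifold has an even number of boundary points, so the component through $(u_p,0)$ is a half-open arc, and along its open end I would extract, using the a priori bound \eqref{eq:EJK-0} (valid for the class-$0$ part for every admissible $(\overline J,H)$) together with Gromov--Floer compactness for \eqref{eq:CRJKH}, a sequence $u_\alpha$ with $u_\alpha(0,0)=p$, $E_{(J_{K_\alpha},H)}(u_\alpha)\le\|H\|$, and $K_\alpha\to\infty$: no breaking occurs at the strip ends $\tau\to\pm\infty$ for the trivial class, sphere bubbling is excluded generically, and a disc bubble forming at bounded $K$ would already supply a nonconstant $J_0$-holomorphic disc with boundary on $L$ of symplectic area in $(0,\|H\|]$, which by itself yields finiteness.

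Now Proposition~\ref{prop:alternative} applies to this sequence and gives the dichotomy: either (1) a non-stationary solution $u_0$ of \eqref{eq:CRJt} with $p\in u_0(\R\times\{0\})$, or (2) a nonconstant $J_0$-holomorphic disc $w\colon(D^2,\del D^2)\to(M,L)$ with $p\in w(\del D^2)$. For positivity, Case~(1) is handled by Corollary~\ref{cor:epsilon>0} directly, while in Case~(2) positivity follows from tameness and the $\epsilon$-regularity theorem applied to $A^U(J_0,L;M,\omega)$, together with the relative monotonicity Lemma~\ref{lem:1/2monotonicity}, which already forces $A(U,L;M,\omega)\ge\tfrac{\pi}{2}\min(r,r_0)^2>0$ for the $(U,L)$-adapted embedding of Proposition~\ref{prop:adaptedB>0}. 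For finiteness, Corollary~\ref{cor:alternative} converts either alternative into $\epsilon(U,L;M,\omega)\le\|H\|<\infty$. Combining the two, $0<\epsilon(U,L;M,\omega)<\infty$; since this is exactly the assertion of Theorem~\ref{thm:exist}, its proof is also complete.

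The main obstacle, as is typical for such arguments, is the compactness bookkeeping in the middle step: verifying that $ev_0^{-1}(p)$ is cleanly cut out as a $1$-manifold with precisely one boundary point, and correctly diagnosing its non-compact end --- that the only escape is $K\to\infty$, or at worst a disc bubble on $L$ --- so that Proposition~\ref{prop:alternative} is genuinely applicable. This is the step where the uniform energy estimate \eqref{eq:EJK-0} and the Gromov--Floer compactness for the cut-off equation \eqref{eq:CRJKH} are essential, following \cite{oh:mrl}.
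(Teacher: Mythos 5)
Your overall strategy matches the paper's: prove finiteness by exhibiting a Gromov--Floer limit with bounded energy via the cut-off equation \eqref{eq:CRJKH}, and prove positivity via the relative monotonicity and $\epsilon$-regularity estimates. The paper's own proof of the Corollary is deliberately terse --- it simply invokes Proposition~\ref{prop:alternative} with $C=\|H\|$ and appeals to Corollary~\ref{cor:epsilon>0} --- and defers the substance (verifying that a sequence satisfying the hypotheses of Proposition~\ref{prop:alternative} actually exists, or else a bubble appears at finite $K$) to the cobordism argument in Section~\ref{sec:proof}. You correctly identify and fill that gap, running the one-dimensional constrained cobordism through $\{0\}\times\MM_0(\overline J,H;0)\cong L$ to produce the required non-compact end.

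One claim in your proposal should be corrected. You assert that ``sphere bubbling is excluded generically.'' The paper deliberately avoids any such transversality claim for bubbles: Remark~\ref{rem:transversality}(2) emphasizes that the scheme is designed so that \emph{no} transversality of sphere or disc bubbles, and no intersection theory between the principal component and bubbles, enters the argument. In general (without monotonicity or semi-positivity hypotheses on $(M,\omega)$), sphere bubbles cannot be ruled out generically in a $1$-parameter family. The correct reading of the non-compactness dichotomy is: either $K_\alpha\to\infty$ with solutions through $p$, or at a finite parameter some nonempty bubble set ${\bf v}\cup{\bf w}$ of spheres and/or discs appears. In the latter case \eqref{eq:EJKu} gives, for that particular adapted $J_0$, that $\min\{A^U(J_0;M,\omega),A^U(J_0,L;M,\omega)\}\leq\|H\|$, and since the same $H$ works for \emph{every} choice of $(U,L)$-adapted pair $(e,J_0)$, taking the supremum yields $A^U(L;M,\omega)\leq\|H\|<\infty$. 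So your conclusion is right, but the reason a sphere bubble is harmless is not that it is excluded --- it is that the invariant $A^U(L;M,\omega)$ is built precisely to absorb both sphere and disc bubbles. A second minor point: fixing the constraint at a single point $p$ and $\tau=0$ may not give the transversality of $Ev_0$ on the nose; the paper instead uses a small embedded loop $\gamma$ in $L\cap U$ through $p$ and a generic path $\Gamma(s)=(\gamma(s),K(s),\tau(s))$ in $L\times\R_+\times\R$ precisely to have enough freedom to perturb the constraint. Your argument works after this standard adjustment.
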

\begin{proof} Take $C = \|H\|$ in Proposition \ref{prop:alternative}. For the case (1),
we have $A(U,L;e,J_0) < \infty$ and for the case of (2)
$A(L;J_0,\omega) < \infty$. This proves $\epsilon(U,L;M,\omega) < \infty$.
On the other hand, $0< \epsilon(U,L;M,\omega)$
follows from Corollary \ref{cor:epsilon>0}.
\end{proof}

\section{Proof of the main theorem}
\label{sec:proof}

We go back to the situation of the Main Theorem, where $L$ is displaceable from $U$
and $L \cap U \neq \emptyset$ so that
there exists a Hamiltonian $H$ such that $\phi_H^1(L) \cap \overline U = \emptyset$.
Let $p \in L\cap U$, fix a symplectic embedding
$e: B^{2n}(r) \to M$ with $p = e(0)$ adapted to $(U,L)$. Then we consider  the set $\CJ_{\omega;e}$ of
almost complex structures adapted to $e$ and form the time-dependent
family $J = \{J_t\}$ and then $\overline J = \{J_K\}$ as in Section \ref{sec:cut-off}.

The following is the basic structure theorem of $\MM_K(J,H;A)$ whose proof
is standard and so is omitted.

\begin{prop}\label{prop:MMKA}
\begin{enumerate}
\item For each fixed $K>0$, there exists a generic choice of
$(\overline J,H)$ such that $\MM_K(\overline J,H;A)$ becomes a smooth manifold of dim
$n + \mu_L(A)$ if non-empty. In particular, if $A=0$, dim
$\MM_K(\overline J,H;A) =n$ if non-empty.
\item For the case $A=0,K=0$, all solutions are constant and Fredholm regular
and hence $\MM_K(\overline J,H;A) \cong L$. Furthermore the evaluation map
$$
ev_{0,0}^0 : \MM_0(\overline J,H;0) \to L :  u \mapsto u(0,0)
$$
is a diffeomorphism.
\item Let $K_0 > 0$ and assume $\MM_{K_0}(\overline J,H;A)$ is regular. Then the parameterized moduli space
$$
\MM_{[0,K_0]}^{para}(\overline J,H;A) := \bigcup_{K \in [0,K_0]} \{K\} \times \MM_K(\overline J,H;A) \to [0,K_0]
$$
is a smooth manifold with boundary, \emph{not necessarily compact}, given by
$$
\left(\{0\} \times \MM_0(\overline J,H;A)\right) \coprod \left(\{K_0\} \times \MM_{K_0}(\overline J,H;A)\right)
$$
and the evaluation map
$$
Ev_A :\MM_{[0,K_0]}^{para}(\overline J,H;A) \times \R \to L \times \R_+ \times
\R : ((K,u), \tau) \mapsto (K,u(\tau,0),\tau)
$$
is smooth.
\end{enumerate}
\end{prop}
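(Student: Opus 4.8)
The plan is to run the standard Floer-theoretic transversality package for the cut-off equation \eqref{eq:CRJKH} and to read off (1)--(3) from it; as in \cite{oh:mrl} and [FHS], [O3] the arguments are routine, so I only indicate the steps. Using the conformal identification $\R \times [0,1] \cong D^2 \setminus \{-1,1\}$ and the removable-singularity extension discussed in Section \ref{sec:cut-off}, I take as configuration space the Banach manifold $\CB_A$ of $W^{k,p}$-maps $(D^2,\del D^2) \to (M,L)$ in class $A \in \pi_2(M,L)$ (with $k \geq 1$, $p > 2$), and $\CE \to \CB_A$ the Banach bundle with fibre $W^{k-1,p}(u^*TM \otimes \Lambda^{0,1})$. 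For fixed $K > 0$ and fixed admissible $(\overline J,H)$ the left-hand side of \eqref{eq:CRJKH} is a smooth section $\CF_{\overline J,H,K}$ of $\CE$ with $\MM_K(\overline J,H;A) = \CF_{\overline J,H,K}^{-1}(0)$; since $\rho_K$ has compact support, $\CF_{\overline J,H,K}$ is a compact perturbation of the $\delbar_{J_0}$-operator on discs, so its vertical linearization is Fredholm of index $n + \mu_L(A)$ by the index formula recalled in Section \ref{sec:cut-off} (cf.\ [G]). Hence $\MM_K(\overline J,H;A)$ is a smooth manifold of dimension $n + \mu_L(A)$ --- of dimension $n$ when $A = 0$ --- at every solution where this linearization is surjective.

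To force surjectivity I would form the universal moduli space $\MM^{\mathrm{univ}}_K(A) = \{(u,\overline J,H) : \CF_{\overline J,H,K}(u) = 0\}$ and show it is a Banach manifold, i.e.\ that the full linearization, now also varying $(\overline J,H)$, is onto at every solution. For a non-constant $u$ this is the usual argument: by unique continuation for the perturbed Cauchy--Riemann operator $u$ possesses a somewhere-injective point, and one can arrange it to lie either in $\{|\tau| < K+1\}$, where $\rho_K > 0$ and a perturbation of $H$ is effective, or in the complement of the pinning locus $e(B^{2n}(r))$, where a perturbation of $J_0$ is effective; either perturbation annihilates a prescribed cokernel element, exactly as in Floer--Hofer--Salamon ([FHS], [O3]). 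A globally constant $u$ forces $A = 0$, and then the vertical linearization is, after trivializing $(T_uM,T_uL)$, the $\delbar$-operator on $W^{k,p}((D^2,\del D^2),(\C^n,\R^n))$, which is already onto --- this is precisely the content of Lemma \ref{lem:MM0K}. Sard--Smale applied to the projection from $\MM^{\mathrm{univ}}_K(A)$ to the space of admissible pairs then yields a residual, hence nonempty, set of $(\overline J,H)$ for which $\MM_K(\overline J,H;A)$ is regular of dimension $n + \mu_L(A)$; this proves (1). For (2): at $K = 0$, Lemma \ref{lem:MM0K} gives that every class-$0$ solution is constant and Fredholm regular, so $\MM_0(\overline J,H;0)$ is the submanifold of constant maps; the evaluation $ev_{0,0}^0(u) = u(0,0)$ sends such a $u$ to its constant value, identifying $\MM_0(\overline J,H;0)$ with $L$, and since the inclusion of constant maps $L \hookrightarrow \CB_0$ is a smooth embedding and evaluation is smooth, $ev_{0,0}^0$ is a diffeomorphism onto $L$.

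For (3) I would adjoin $K \in [0,K_0]$ as an extra parameter, i.e.\ consider the section $\widetilde\CF(K,u) := \CF_{\overline J,H,K}(u)$ over the Banach manifold with boundary $[0,K_0] \times \CB_A$, whose boundary is $\{0,K_0\} \times \CB_A$. The same somewhere-injectivity analysis, together with Lemma \ref{lem:MM0K} for the globally constant case, shows that for generic $(\overline J,H)$ the zero set $\widetilde\CF^{-1}(0) = \MM_{[0,K_0]}^{para}(\overline J,H;A)$ is transversally cut out; fix such a pair for which moreover $\MM_{K_0}(\overline J,H;A)$ is regular (a generic condition on $K_0$, or simply the hypothesis). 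By the general fact that the zero set of a transverse section over a manifold with boundary is again a manifold with boundary, equal there to the zero set over the boundary, and since both boundary fibres are regular --- at $K = 0$ by Lemma \ref{lem:MM0K}, at $K = K_0$ by hypothesis --- $\MM_{[0,K_0]}^{para}(\overline J,H;A)$ is a smooth manifold with boundary $(\{0\}\times\MM_0(\overline J,H;A)) \coprod (\{K_0\}\times\MM_{K_0}(\overline J,H;A))$. It need not be compact: the uniform energy bound \eqref{eq:EJKbound} yields only a Gromov--Floer compactification, not compactness of the moduli space itself, since a sequence may degenerate by bubbling off $J_0$-holomorphic spheres or discs or by breaking. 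Finally $Ev_A((K,u),\tau) = (K,u(\tau,0),\tau)$ is smooth, being a composition of the smooth evaluation $\CB_A \times \R \to L$ with projections.

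The only point beyond bookkeeping is this surjectivity of the universal linearization: because the cut-off switches off the Hamiltonian term for $|\tau|$ large, a solution could a priori be non-constant in its tails alone, so somewhere-injectivity in a region where one is free to perturb is not literally automatic. The standard resolution --- unique continuation for the perturbed $\delbar$-operator, which supplies an injective point inside $\{\rho_K > 0\}$ or outside the pinning locus $e(B^{2n}(r))$ unless $u$ is globally constant, the latter case being handled by Lemma \ref{lem:MM0K} --- goes through here verbatim, which is exactly why the result deserves to be called ``standard''; but it is the one place where the argument has to be carried out with care rather than merely quoted.
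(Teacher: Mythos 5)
Your proposal is correct and takes essentially the route the paper intends: the paper omits the proof of Proposition \ref{prop:MMKA} as standard Fredholm/Sard--Smale theory, and the only point it singles out, in Remark \ref{rem:transversality}(1), is that $J_0$ is constrained to $\CJ_{\omega;e}$ and hence unperturbable on $e(B^{2n}(r))$ --- resolved, as you do, by locating an injective point where either the Hamiltonian perturbation acts ($\rho_K>0$) or $J_0$ may be varied (outside the pinned ball), with the constant/class-zero case absorbed by Lemma \ref{lem:MM0K}. Nothing further is needed.
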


\begin{rem}\label{rem:transversality}
\begin{enumerate}
\item We note that the $J_0$ we are using comes from
$\CJ_{\omega;e}$ not from the set of \emph{all} compatible almost complex
structures. Therefore we need to make sure the standard transversality proof
such as \cite{floer, FHS,oh:jga} can be applied
for this restricted class. But this can be seen from the fact that there is no
non-constant solution of \eqref{eq:CRJKH} or \eqref{eq:CRJt} whose image is entirely contained in
$e(B^{2n}(r))$. (See p.323-324 \cite{oh:imrn}, especially the top of p.324 for the explanation
in a similar context.)
\item We also mention that the way how we present our proof, especially Proposition
\ref{prop:alternative}, is deliberately devised so that no transversaility result for the bubbles
either of the spheres or of the discs, nor the intersection theory between
the principal components with bubbles enter in the proof. Only the Gromov-Floer compactness and
the transversality result mentioned in Lemma \ref{lem:MM0K} and the one in (1) of this remark are used.
Both transversality results are easy and standard.
This enables us to dispose any virtual cycle technique and
any kind of positivity hypothesis of Lagrangian submanifolds or of symplectic
manifolds both in the statement of and in the proof of our main theorem.
\end{enumerate}
\end{rem}

With this discussion above in mind, we use a priori bound \eqref{eq:EJK-0}
to apply the following Gromov-Floer compactness theorem \cite{gromov}, \cite{ye},
\cite{floer}, \cite{fukaya-ono}.

\begin{prop}\index{Gromov-Floer compactness theorem}\label{prop:MMAK}
Let $K_\alpha$ with $\alpha =1, \cdots $ converging to $K' \in \R_+ $
and $u_\alpha$ be a sequence of solutions of \eqref{eq:CRJKH} for $K = K_\alpha$ with uniform
bound
$$
E_{J_{(K_\alpha,H)}} (u_\alpha) < C \;\; {\rm for}\;\; C \;\;
      \mbox{independent of} \;\;  \alpha.
$$
Then there exist a subsequence again enumerated by $u_\alpha$ and a
cusp-trajectory $(u_0, {\bf v}, {\bf w})$ such that
\begin{enumerate}
\item  $u_0$ is a solution of \eqref{eq:CRJKH} with $K = K'$.
\item   ${\bf v} = \{ v_i \}^{N_1}_{i=1}$ where each $v_i$ is a
$J_{(\tau_i, t_i)}$-holomorphic sphere and ${\bf w} = \{w_j\}_{j=1}^{N_2}$ each $w_j$ is a
$J_0$-holomorphic disc with boundary lying on $L$.
\item We have the convergence
$$
\lim_{\alpha \to \infty} E_{J_{(K_\alpha,H)}}
(u_\alpha) = E_{J_{K'}} (u_0) + \sum_i \omega ([v_i]) +
\sum_j \omega ([w_j]).
$$
\item   And $u_\alpha$ converges to $(u_0, {\bf v}, {\bf
w})$ in Hausdorff topology and converges in compact $C^\infty$ topology away
from the nodes.
\end{enumerate}
\end{prop}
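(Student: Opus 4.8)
The plan is to run the standard bubbling-off argument for the cut-off perturbed Cauchy--Riemann equation \eqref{eq:CRJKH}, essentially reproducing the schemes of \cite{gromov,ye,floer,fukaya-ono}; the only features requiring attention are that \eqref{eq:CRJKH} carries the zeroth-order term $\rho_K(\tau)X_H(u)$ and a domain-dependent $J_K(\tau,t)$, and that the boundary condition is Lagrangian, so that both sphere bubbles and disc bubbles may occur. First I would pass, as in Section~\ref{sec:cut-off}, to the conformal model $\R\times[0,1]\cong D^2\setminus\{-1,1\}$; since $\rho_K\equiv 0$ and $J_K\equiv J_0$ for $|\tau|$ large, both extend smoothly across $\{-1,1\}$, so \eqref{eq:CRJKH} is genuinely a $\delbar$-type equation on the fixed compact surface $(D^2,\partial D^2)$ with smooth domain-dependent data, a compact (zeroth-order) perturbation of the ordinary $\delbar_{J_0}$ equation for discs with boundary on $L$. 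Because $K_\alpha\to K'<\infty$, the data $(\rho_{K_\alpha},J_{K_\alpha})$ converge in $C^\infty$ to $(\rho_{K'},J_{K'})$; because $(M,\omega,J_0)$ is tame and $L$ is compact, the monotonicity formula gives a uniform $C^0$ bound on $\Image u_\alpha$, so the whole discussion takes place in a fixed compact region of $M$ and the standard interior and totally-real-boundary elliptic estimates for the perturbed operator are available.

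Next I would introduce the (necessarily finite) bubbling set $Z\subset D^2$ consisting of the points $z$ such that $\limsup_{\alpha\to\infty}\int_{B_\varepsilon(z)}|\partial_\tau u_\alpha|^2_{J_{K_\alpha}}\,dt\,d\tau\ge\hbar$ for every $\varepsilon>0$, where $\hbar>0$ is a common lower bound for the energy of a non-constant $J$-holomorphic sphere for any tame $J$ close to $J_0$ and of a non-constant $J_0$-holomorphic disc with boundary on $L$; such an $\hbar$ exists by the $\varepsilon$-regularity theorem and tameness, exactly as in the positivity statements for $A(J_0;M,\omega)$ and $A(J_0,L;M,\omega)$ recalled in the introduction. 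The uniform bound $E_{(J_{K_\alpha},H)}(u_\alpha)<C$ then forces $\#Z\le C/\hbar$. Off $Z$ the energy density of $u_\alpha$ is locally uniformly bounded, so by elliptic bootstrapping and a diagonal argument a subsequence converges in $C^\infty_{\mathrm{loc}}(D^2\setminus Z)$ to a finite-energy solution of \eqref{eq:CRJKH} with $K=K'$, which the removable singularity theorem \cite{oh:removal} extends across $Z$ to the desired $u_0$; this gives (1).

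At each $z_i\in Z$ I would perform the usual conformal rescaling: pick $z_\alpha\to z_i$ realizing a local maximum $\lambda_\alpha^{-1}\to\infty$ of $|\partial_\tau u_\alpha|$ and set $\widetilde u_\alpha(z)=u_\alpha(z_\alpha+\lambda_\alpha z)$. The rescaled equation has Hamiltonian term with coefficient $\lambda_\alpha\,\rho_{K_\alpha}(\cdot)\to 0$, while $J$ converges to the fixed value $J_{(\tau_i,t_i)}$ in the interior case and to $J_0$ on the boundary (where $J_{K_\alpha}\equiv J_0$ already). Hence for $t_i\in(0,1)$ the limit is a non-constant $J_{(\tau_i,t_i)}$-holomorphic sphere $v_i$, and for $t_i\in\{0,1\}$ it is, after the usual care over whether a disc or a sphere bubbles off (handled by inducting on the total energy, each bubble consuming at least $\hbar$, so the induction terminates), a non-constant $J_0$-holomorphic disc $w_j$ with boundary on $L$. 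Iterating over the finitely many primary and secondary concentration points yields the collections ${\bf v}=\{v_i\}_{i=1}^{N_1}$, ${\bf w}=\{w_j\}_{j=1}^{N_2}$ of (2); the Hausdorff convergence together with $C^\infty_{\mathrm{loc}}$-convergence away from the nodes in (4) then follows from the construction and the description of the necks.

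The main obstacle, as always in Gromov--Floer compactness, is the energy identity (3): one must rule out energy escaping into the thin neck regions that join $u_0$ to the bubbles and the bubbles to one another. This is the \emph{no neck energy} lemma, proved by the standard annulus argument --- on a long conformal neck (a half-annulus near a boundary bubble, a full annulus near an interior one) carrying small total energy, the isoperimetric inequality for nearly $J$-holomorphic curves forces the energy to decay geometrically along the neck, so the total neck energy tends to $0$. Here one uses that the energy of a solution of \eqref{eq:CRJKH} is the conformally invariant quantity $\int|\partial_\tau u|^2$ of \eqref{eq:EJK}, and that along each neck the map is $J$-holomorphic for $J$ uniformly close to a fixed tame almost complex structure (the Hamiltonian term being absent near boundary bubbles, multiplied by the vanishing rescaling factor near interior bubbles, and in any case compactly supported in $\tau$), so the purely pseudo-holomorphic neck analysis of \cite{gromov} and its Floer-theoretic form in \cite{floer,fukaya-ono} applies without change. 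Adding up the areas of $u_0$, the $v_i$ and the $w_j$ and invoking the vanishing of neck energy then yields the equality in (3).
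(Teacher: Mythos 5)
The paper does not prove this proposition: it is invoked as a known Gromov--Floer compactness result, cited to \cite{gromov}, \cite{ye}, \cite{floer}, \cite{fukaya-ono}, and applied directly, so there is no argument in the paper to compare yours against. Your sketch is the standard bubbling argument and it correctly isolates the points specific to \eqref{eq:CRJKH}: the equation extends smoothly to $(D^2,\partial D^2)$ because $\rho_K\equiv 0$ and $J_K\equiv J_0$ for $|\tau|$ large (so the analysis is that of a compactly perturbed disc equation on a fixed compact surface); the $C^\infty$-convergence $(\rho_{K_\alpha},J_{K_\alpha})\to(\rho_{K'},J_{K'})$ uses $K'<\infty$ and makes $u_0$ a solution of \eqref{eq:CRJKH} at $K=K'$; the rescaling factor $\lambda_\alpha\,\rho_{K_\alpha}\to 0$ kills the Hamiltonian term on interior bubbles, giving genuine $J_{(\tau_i,t_i)}$-holomorphic spheres; and $J_K|_{t=0,1}=J_0$ from \eqref{eq:J} forces the boundary bubbles to be $J_0$-holomorphic discs on $L$, matching the statement.

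The one place to tighten is the no-neck-energy step. You partly justify it by saying the Hamiltonian term is ``compactly supported in $\tau$''; this is not the right reason, since necks form near finite concentration points $(\tau_i,t_i)$ where $\rho_{K'}$ is generically $1$, not $0$. The correct mechanism is that on a long conformal annulus around $(\tau_i,t_i)$, passing to the rescaled coordinate multiplies the zeroth-order term by the shrinking conformal factor, and the annulus carries total energy below the threshold $\hbar$, so the exponential-decay estimate for nearly-holomorphic long annuli (and its half-annulus analogue at boundary bubbles, using the Lagrangian boundary condition) forces the neck energy to zero. With that adjustment your argument is a faithful reconstruction of what the cited references contain.
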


\begin{proof}[Wrap-up of the proof of Theorem \ref{thm:enhanced}]
For the simplicity of notations, we will just denote $ev_K = ev_{0,K}^0$
defined in \eqref{eq:eviAK}.

We consider two cases separately.
The first case is when the hypothesis of Proposition \ref{prop:alternative} holds so that
there exists a sequence  $K_\alpha \to \infty$ for which
$$
\MM_{K_\alpha}(\overline J, H) \cap E_{(J_{K_\alpha},H)}^{-1}([0,C]) \cap ev_{K_\alpha}^{-1}(p) \neq \emptyset
$$
with the choice of
$$
C = \|H\|.
$$
Then Proposition \ref{prop:case-exist},
\eqref{eq:epsilon>} and Corollary \ref{cor:alternative} already prove Theorem \ref{thm:enhanced}.

Therefore it remains to consider the case where there is a constant $K_0$ for which
\be\label{eq:defnK0}
\MM_K(\overline{J},H) \cap E_{(J_K,H)}^{-1}([0, C]) \cap ev_{K}^{-1}(p) = \emptyset
\ee
for all $K \geq K_0$. We fix one such $K_0 > 0$ in the rest of the proof.
In particular, we have
$$
\MM_K(\overline{J},H;0) \cap ev_{K}^{-1}(p) = \emptyset
$$
by the energy estimate \eqref{eq:EJK-0}.

Now consider an embedded small loop $\gamma:[0,1] \to L$
such that
\be\label{eq:im-gamma}
p= \gamma(0) = \gamma(1), \quad \Image \gamma \subset L \cap U.
\ee
We may choose $\gamma$ so that $\Image \gamma$ is as close to $p$ as we want.

Then we consider a smooth embedded path $\Gamma: [0,1] \to L \times \R_+ \times \R$ with
$$
\Gamma(s) = (\gamma(s), K(s), \tau(s))
$$
such that
\be\label{eq:K0K1}
K(0) =0, \quad \text{\rm and}
\quad  K_0  \leq  K(1) \leq 2K_0.
\ee
Recall that $N_\Gamma$ is regular at $s = 0, \, 1$: This is because
\be\label{eq:defnK0-2}
\MM_{K(1)}(\overline{J},H;0) \cap ev_{K(1)}^{-1}(p) = \emptyset
\ee
by the choice of $K_0$ in \eqref{eq:defnK0} for which the regularity statement is vacuous.
On the other hand for $s=0$, since $K(0) = 0$
$$
\MM_{K(0)}(\overline{J},H;0) \cap ev_{K(0)}^{-1}(p)
= \MM_0(\overline{J},H;0) \cap ev_{0}^{-1}(p).
$$
But the latter intersection
consists of a single element which is a constant map.
This constant map is regular by Lemma \ref{lem:MM0K}.

Applying  Proposition \ref{prop:MMKA} and
the transvesality extension theorem, for a generic choice of $\Gamma$,
we can make
the map \eqref{eq:Ev0} transversal to the path $\Gamma$
so that $N_\Gamma:= Ev_0 ^{-1}(\Gamma)$ becomes a one
dimensional manifold with its boundary consisting of
$$
\MM_{K(0)}(\overline{J},H;0) \cap ev_0^{-1}(p)
\coprod \MM_{K(1)}(\overline{J},H;0) \cap ev_{K(1)}^{-1}(p).
$$
However the second summand is empty by \eqref{eq:defnK0-2} and so $\del N_\Gamma$
consists of a \emph{single} point that is regular.
Therefore the one-dimensional cobordism $N_\Gamma$ cannot be compact
by the classification theorem of compact one-manifolds.

Applying the paramterized version of Proposition \ref{prop:MMAK}
under the given energy bound, we conclude that there exists a sequence
$\{(s_\alpha, u_\alpha)\}$ with $s_\alpha \to s_0$ and $0< s_0 <1$ and a \emph{non-empty} set
${\bf v} \cup {\bf w}$ of
bubbles ${\bf v}=\{v_i\}_{i=1}^{N_1}$, $ {\bf w} =\{w_j\}_{j=1}^{N_2}$ such that
$u_\alpha \in \MM_{K(s_\alpha)}(\overline{J},H;0)$
weakly converges to the cusp curve
\be\label{eq:uinfty}
u_\infty = u_0+ \sum_{i=1}^{N_1} v_i + \sum_{j=1}^{N_2} w_j.
\ee
Here $u_0 \in \CM_{K(s_0)}(\overline J,H)$, and $w_k$'s and $v_\ell$'s
are non-constant $J_0$-holomorphic discs and $J_{(K(s_0), {\tau_\ell}, t_\ell)}$-holomorphic
spheres for some $(\tau_\ell, t_\ell)$ respectively.
It follows from a dimension counting that $s_0$ cannot be either 0 or 1, because
the corresponding moduli spaces restricted thereto are Fredholm regular.
We also have the energy bound
\be\label{eq:EJKu}
E_{(J_{K(s_0)},H)} (u_0) + \sum_i \omega ([v_i]) + \sum_j \omega ([w_j]) \leq \| H \|.
\ee

%\begin{rem}\label{rem:geometricbound} We would like to alert readers that $[u_0] \neq 0$ any more and so
%the energy estimate $E_{(J_\infty,H)}(u_0) \leq \|H\|$ does not follow from \eqref{eq:EJK-0}
%by a topological reason. Rather this inequality is a geometric consequence
%of the convergence scheme which leads to the energy inequality \eqref{eq:EJKu}, combined with
%the positivity of symplectic areas $\omega ([v_i])$ and $\omega ([w_j])$:
%\emph{bubbling-off eats up positive energy!}
%\end{rem}
%
%Therefore the point $p$ cannot be contained in $\Image u_0|_{\R \times \{0\}}$
%by \eqref{eq:defnK0} which implies that
%there must be some $w_k$ such that $p \in \Image w_k(\del D^2)$.
Since the bubble set ${\bf v} \cup {\bf w}$ is not empty, the energy bound \eqref{eq:EJKu} implies
$$
A^U(L;M,\omega) \leq \|H\|
$$
by the definition \eqref{eq:AULMomega} of $A^U(L;M,\omega)$.

Combining Corollary \ref{cor:alternative} and the above analysis of
failure of compactness, we have proved
$$
\epsilon(U,L;M,\omega) \leq \|H\|
$$
for any Hamiltonian $H$ such that $\overline U \cap \phi_H^1(L) = \emptyset$.
By taking the infimum over all such $H$, we have obtained
$$
e(\overline U,L) \geq \epsilon(U,L;M,\omega) > 0.
$$
This finishes the proof of Theorem \ref{thm:enhanced}.
\end{proof}

\begin{rem}\label{rem:comparison} In this remark, we would like to compare the arguments used
above proof
with that of \cite{oh:mrl}. The main difference in the geometric circumstances between \cite{oh:mrl}
and the present case is as follows: In the former case, $L$ was displaceable, i.e.,
there was a Hamiltonian $H$ such that $\phi_H^1(L) \cap L =\emptyset$, while
in the present case $L$ is displaceable from an open subset $U$, i.e.,
$$
L \cap U \neq \emptyset, \quad \phi_H^1(L) \cap \overline U =\emptyset
$$
for open subset $U \subset M$ .

Even though a choice of an embedded path
$\gamma:[0,1] \to L$ and consideration of the one-dimensional
cobordism $N_\Gamma$ defined as above was also made
in the proof of the main theorem \cite{oh:mrl} (see p.902 therein), such a choice was
unnecessary for the purpose of \cite{oh:mrl} because it is enough to know non-compactness of the full
$(n+1)$-dimensional cobordism  to prove
the inequality $e(L,L) \geq A(L;M,\omega)$ in the scheme used therein. (As a matter of fact,
the author made such a consideration at that time having application
to the study of Maslov class obstruction in his mind. He did not
pursue this further realizing that such a consideration did not gain much in that
it does not give rise to anything significant for the study of Maslov class obstruction
beyond that of Polterovich \cite{polterov:maslov}.) However consideration here of
this one-dimensional cobordism through a point $p \in L \cap U$,
accompanied by the construction of the invariant $A(U,L;M,\omega)$ relative to the open subset $U$,
is a crucial ingredient needed for the proof of Main Theorem.
\end{rem}

\end{document}